\def\deq{\triangleq}
\def\d{{\mathrm d}}
\def\E{{\mathbb E}}
\def\PP{{\mathbb P}}
\def\1{{\mathbf 1}}
\def\TV{{\text{\tiny TV}}}
\def\bs{{\backslash}}
\newcommand{\tvnrm}[1]{{\left\Vert #1 \right\Vert}_{\TV}}
\newcommand{\hide}[1]{}
\def\cB{{\cal B}}
\def\cC{{\cal C}}
\def\cF{{\cal F}}
\def\cL{{\cal L}}
\def\sE{{\mathsf E}}
\def\sX{{\mathsf X}}
\def\Reals{{\mathbb R}}
\def\R{\Reals}
\def\bdelta{{\boldsymbol{\delta}}}
\newcommand{\trn}{\ensuremath{{\scriptscriptstyle{\text T}}}}
\newcommand{\set}[1]{\left\{ #1 \right\}}
\begin{document}

\title*{Concentration of Measure without Independence: a Unified Approach via the Martingale Method}
\titlerunning{Concentration without independence}

\author{Aryeh Kontorovich and Maxim Raginsky}

\institute{Aryeh Kontorovich \at Department of Computer Science, Ben-Gurion University, Beer-Sheva, Israel, \email{karyeh@cs.bgu.ac.il}
\and Maxim Raginsky \at Department of Electrical and Computer Engineering and the Coordinated Science Laboratory, University of Illinois, Urbana, IL, USA, \email{maxim@illinois.edu}
}

\maketitle

\abstract{The concentration of measure phenomenon
may be summarized as follows: a function of many weakly
dependent random variables that is not too sensitive to any
of its individual arguments will tend to take values very close
to its expectation. This phenomenon is most completely understood
when the arguments are mutually independent random variables, and there exist several powerful complementary methods for proving concentration inequalities, such as the martingale method, the entropy method, and the method of transportation inequalities. The setting of dependent arguments is much less well understood. This chapter focuses on the martingale method for deriving concentration inequalities without independence assumptions. In particular, we use the machinery of so-called Wasserstein matrices to show that the Azuma-Hoeffding concentration inequality for martingales with almost surely bounded differences,
  when applied in a sufficiently abstract setting, is powerful enough
  to recover and sharpen several known
  concentration results for nonproduct measures. Wasserstein matrices provide a natural formalism
  for capturing the interplay between the metric
  and the probabilistic structures, which is fundamental to the concentration phenomenon.}

\section{Introduction}

At its most abstract, the concentration of measure phenomenon
may be summarized as follows: a function of several weakly
dependent random variables that is not too sensitive to any
of the individual arguments will tend to take values very close
to its expectation. This phenomenon is most completely understood
in the case of independent arguments, and the recent book
\cite{Boucheron-book-2013} provides an excellent survey (see also \cite{Raginsky_Sason_FnT} for an exposition from the viewpoint of, and with applications to, information theory).

The case of dependent arguments has yet to mature into such a unified, overarching
theory. The earliest concentration results for non-product measures
were established for Haar measures on various groups, and
relied strongly on the highly symmetric nature of the Haar measure in question. These results
include L\'evy's classic isoperimetric inequality on the sphere
\cite{levy51} and
Maurey's
concentration inequality on the permutation group
\cite{maurey79}.
To the best of our knowledge, the first concentration result for a non-product, non-Haar measure
is due to Marton \cite{marton96},
where she proved a McDiarmid-type bound for contracting Markov chains.
A flurry of activity followed. Besides Marton's own follow-up work
\cite{marton98,marton03,marton04},
the transportation method she pioneered was extended by Samson
\cite{samson00}, and martingale techniques
\cite{MR1771956,chazottes07,kontram06},
as well as methods relying on the
Dobrushin interdependence matrix
\cite{kulske03,chatterjee05,wu06}, have been employed in obtaining
concentration results for non-product measures.
The underlying theme is that the independence assumption may be relaxed
to one of {\em weak} dependence, the latter being quantified by
various mixing coefficients.

This chapter is an attempt at providing an abstract unifying framework
that generalizes and sharpens some of the above results.
This framework combines classical martingale techniques
with the
method of \textit{Wasserstein matrices} \cite{follmer79_tail}. In particular, we rely on Wasserstein matrices to obtain general-purpose quantitative estimates of the local variability of a function of many dependent random variables after taking a conditional expectation with respect to a subset of the variables. A concentration inequality in a metric space must
necessarily
capture the
interplay between the metric and the distribution,
and, in our setting, Wasserstein matrices provide the ideal analytical tool for this task.
As an illustration, we recover (and, in certain cases, sharpen) some
results of \cite{kulske03,chazottes07,kontram06} by demonstrating all of these to be
special cases of the Wasserstein matrix method.

The remainder of the chapter is organized as follows. Section~\ref{sec:prelims} is devoted to setting up the basic notation and preliminary definitions. A brief discussion of the concentration of measure phenomenon in high-dimensional spaces is presented in Section~\ref{sec:concentration}, together with a summary of key methods to establish concentration under the independence assumption. Next, in Section~\ref{sec:martingale}, we present our abstract martingale technique and then demonstrate its wide scope in Section~\ref{sec:using_martingales} by deriving many of previously published concentration inequalities as special cases. We conclude in Section~\ref{sec:open} by listing some open questions.

\section{Preliminaries and notation}
\label{sec:prelims}

\subsection{Metric probability spaces} A \textit{metric probability space} is a triple $(\Omega,\mu,d)$, where $\Omega$ is a Polish space equipped with its Borel $\sigma$-field, $\mu$ is a Borel probability measure on $\Omega$, and $d$ is metric on $\Omega$, assumed to be
 a measurable function on the product space $\Omega \times \Omega$. We do not assume that $d$ is the same metric that metrizes the Polish topology on $\Omega$. 

\subsection{Product spaces}
\label{ssec:product_spaces}

Since concentration of measure is a high-dimensional phenomenon, a natural setting for studying it is that of a product space. Let $T$ be a finite index set, which we identify with the set $[n] \deq \{1,\ldots,n\}$, where $n = |T|$ (this amounts to fixing some linear ordering of the elements of $T$). We will use the following notation for subintervals of $T$: $[i] \deq \{1,\ldots,i\}$; $[i,j] \deq \{i,i+1,\ldots,j\}$ for $i \neq j$; $(i,j] \deq \{i+1,\ldots,j\}$ for $i < j$; $(i,j) \deq \{i+1,\ldots,j-1\}$ for $i < j-1$; etc.

With each $i \in T$, we associate a measurable space $(\sX_i,\cB_i)$, where $\sX_i$ is a Polish space and $\cB_i$ is its Borel $\sigma$-field. For each $I \subseteq T$, we will equip the product space $\sX^I \deq \prod_{i \in I}\sX_i$ with the product $\sigma$-field $\cB^I \deq \bigotimes_{i \in I}\cB_i$. When $I = T$, we will simply write $\sX$ and $\cB$. We will write $x^I$ and $x$ for a generic element of $\sX^I$ and $\sX$, respectively. Given two sets $I,J \subset T$ with $I \cap J = \varnothing$, the \textit{concatenation} of  $x^I \in \sX^I$ and $z^J \in \sX^J$ is defined as $y = x^Iz^J \in \sX^{I \cup J}$ by setting
\begin{align*}
	y_i = \begin{cases}
	x_i, & i \in I \\
	z_i, & i \in J
	\end{cases}.
\end{align*}
Given a random object $X = (X_i)_{i \in T}$ taking values in $\sX$ according to a probability law $\mu$, we will denote by $\PP_\mu[\cdot]$ and $\E_\mu[\cdot]$ the probability and expectation with respect to $\mu$, by $\mu^I(\d x^I | x^{J})$ the regular conditional probability law of $X^I$ given $X^{J} = x^{J}$, and by $\mu^I(\d x^I)$ the marginal probability law of $X^I$. When $I = \{i\}$, we will write $\mu_i(\cdot)$ and $\mu_i(\cdot|x^J)$.

For each $i \in T$, we fix a metric on $\sX_i$, which is assumed to be measurable with respect to the product $\sigma$-field $\cB_i \otimes \cB_i$. For each $I \subseteq T$, equip $\sX^I$ with the product metric $\rho^I$, where
\begin{align*}
	\rho^I(x^I,z^I) \deq \sum_{i \in I}\rho_i(x_i,z_i), \qquad \forall x^I,z^I \in \sX^I.
\end{align*}
When $I \equiv T$, we will simply write $\rho$ instead of $\rho^T$. In this way, for any Borel probability measure $\mu$ on $\sX$, we can introduce a ``global'' metric probability space $(\sX,\mu,\rho)$, as well as ``local" metric probability spaces $(\sX^I,\mu^I,\rho^I)$ and $(\sX^I,\mu^I(\cdot|x^J),\rho_I)$ for all $I,J \subset T$ and all $x^J \in \sX^J$. 

\subsection{Couplings and transportation distances} Let $\Omega$ be a Polish space. A \textit{coupling} of two Borel probability measures $\mu$ and $\nu$ on 
$\Omega$ is a Borel probability measure ${\bf P}$ on the product space $\Omega \times \Omega$, such that ${\bf P}(\cdot \times \Omega) = \mu$ and ${\bf P}(\Omega \times \cdot) = \nu$. We denote the set of all couplings of $\mu$ and $\nu$ by $\cC(\mu,\nu)$. Let $d$ be a lower-semicontinuous metric on $\Omega$. We denote by ${\rm Lip}(\Omega,d)$  the space of all functions $\Omega \to \R$ that are Lipschitz with respect to $d$, and by ${\rm Lip}_c(\Omega,d)$ the subset of ${\rm Lip}(\Omega,d)$ consisting of $c$-Lipschitz functions. The \textit{$L^1$ Wasserstein} (or \textit{transportation}) \textit{distance} between $\mu$ and $\nu$ is defined as
\begin{align}
	W_d(\mu,\nu) \deq \inf_{{\bf P} \in \cC(\mu,\nu)} \E_{{\bf P}}[d(X,Y)],
\end{align}
where $(X,Y)$ is a random element of $\Omega \times \Omega$ with law ${\bf P}$. The transportation distance admits a dual (Kantorovich--Rubinstein) representation
\begin{align}\label{eq:Wasserstein_dual}
	W_d(\mu,\nu) = \sup_{f \in {\rm Lip}_1(\Omega,d)} \left| \int_\Omega f \d\mu - \int_\Omega f \d\nu\right|.
\end{align}
For example, when we equip $\Omega$ with the trivial metric $d(\omega,\omega') = {\bf 1}\{\omega \neq \omega'\}$, the corresponding Wasserstein distance coincides with the total variation distance:
\begin{align*}
	W_d(\mu,\nu) = \tvnrm{\mu-\nu} = \sup_{A} |\mu(A) - \nu(A)|,
\end{align*}
where the supremum is over all Borel subsets of $\Omega$.

In the context of the product space $(\sX,\rho)$ defined earlier, we will use the shorthand $W_i$ for $W_{\rho_i}$, $W^I$ for $W_{\rho^I}$, and $W$ for $W_\rho$.

\subsection{Markov kernels and Wasserstein matrices}

 A \textit{Markov kernel} on $\sX$ is a mapping $K : \sX \times \cB \to [0,1]$, such that $x \mapsto K(x,A)$ is measurable for each $A \in \cB$, and $K(x,\cdot)$ is a Borel probability measure on $\sX$ for each $x \in \sX$. Given a Markov kernel $K$ and a bounded measurable function $f : \sX \to \Reals$, we denote by $Kf$ the bounded measurable function
\begin{align*}
	Kf(x) \deq \int_\sX f(y) K(x,\d y), \qquad x \in \sX.
\end{align*}
Likewise, given a Borel probability measure $\mu$ on $\sX$, we denote by $\mu K$ the Borel probability measure
\begin{align*}
	\mu K(A) \deq \int_\sX K(x,A) \mu(\d x), \qquad A \in \cB.
\end{align*}
It is not hard to see that $\int_\sX f \d (\mu K) = \int_\sX (K f)\d\mu$. 

Given a measurable function $f : \sX \to \Reals$, we define the \textit{local oscillation of $f$ at $i \in T$} as
\begin{align*}
  \delta_i(f)
  \deq \sup_{
    x, z
    \in \sX \atop x^{T \bs \{i\}} = z^{T \bs \{i\}}
  }
  \frac{|f(x)-f(z)|}{\rho_i(x_i,z_i)},
\end{align*}
where we follow the convention $0/0 = 0$. This quantity measures the variability of $f$ in its $i$th argument when all other arguments are held fixed. As will become evident later on, our martingale technique for establishing concentration inequalities for a given function $f : \sX \to \Reals$ requires controlling the local oscillations $\delta_i(Kf)$ in terms of the local oscillations $\delta_i(f)$ for appropriately chosen Markov kernels $K$. 

To get an idea of what is involved, let us consider the simple case when each $\sX_i$ is endowed with the scaled trivial metric $\rho_i(x_i,z_i) \deq \alpha_i \1\{x_i \neq z_i\}$, where $\alpha_i > 0$ is some fixed constant. Then
\begin{align*}
  \delta_i(f) = \frac{1}{\alpha_i}\sup\Big\{ |f(x)-f(z)| : x,z \in \sX,\, x^{T\backslash\set{i}} = z^{T\backslash\set{i}}
  \Big\}.
\end{align*}
The corresponding metric $\rho$ on $\sX$ is the weighted Hamming metric
\begin{align}
	 \label{weighted-ham-def}
\rho_{\boldsymbol{\alpha}}(x,z) \deq \sum_{i \in T}\alpha_i \1\{x_i \neq z_i\}.
\end{align}
Fix a Markov kernel $K$ on $\sX$. The \textit{Dobrushin contraction coefficient} 
of $K$ (also associated in the literature with Doeblin's name) is the smallest $\theta\ge0$ for which $\tvnrm{K(x,\cdot)-K(z,\cdot)}\le\theta$
holds for all $x,z\in\sX$. The term \textit{contraction} is justified by the well-known inequality
(apparently going back to Markov himself \cite[$\mathsection$5]{mar1906})
\begin{align}
  \label{eq:doblin}
  \tvnrm{\mu K-\nu K} \le \theta\tvnrm{\mu-\nu},
\end{align}
which holds for all probability measures $\mu,\nu$ on $\sX$. Then we have the following estimate:

\begin{proposition}
  If $K$ is a Markov kernel on $\sX$ with Dobrushin coefficient $\theta$,
  then
  for every $i \in T$ and for every
  $f \in {\rm Lip}(\sX,\rho_{\boldsymbol{\alpha}})$, we have
\begin{align}\label{eq:Dobrushin_bound}
	\delta_i(Kf)\le \frac{\theta}{\alpha_i}\sum_{j \in T}\alpha_j\delta_j(f).
\end{align}
\end{proposition}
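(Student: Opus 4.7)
The plan is to unwind the definition of $\delta_i(Kf)$, pick two points $x,z\in\sX$ that agree off coordinate $i$, and bound $|Kf(x)-Kf(z)|$ by coupling $K(x,\cdot)$ against $K(z,\cdot)$. Since $\rho_i(x_i,z_i) = \alpha_i$ when $x_i \neq z_i$, the desired inequality is equivalent to
\begin{align*}
  |Kf(x)-Kf(z)| \;\le\; \theta \sum_{j\in T} \alpha_j \delta_j(f)
\end{align*}
whenever $x^{T\bs\{i\}} = z^{T\bs\{i\}}$, so I would focus entirely on proving this pointwise bound.

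First I would establish a telescoping inequality for $f$ itself. For arbitrary $y,y'\in\sX$, changing one coordinate at a time and applying the definition of $\delta_j(f)$ in each step (noting $\rho_j(y_j,y'_j) = \alpha_j \1\{y_j\neq y'_j\}$) gives
\begin{align*}
  |f(y)-f(y')| \;\le\; \sum_{j\in T} \alpha_j \delta_j(f) \, \1\{y_j \neq y'_j\}.
\end{align*}
This is the replacement for a single global Lipschitz constant and is the technical heart of the argument in the weighted Hamming setting.

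Next I would invoke the coupling representation of total variation: there exists a coupling $\mathbf{P}\in\cC(K(x,\cdot),K(z,\cdot))$ (the maximal coupling) with $\mathbf{P}(Y\neq Y') = \tvnrm{K(x,\cdot)-K(z,\cdot)} \le \theta$. Writing $Kf(x)-Kf(z) = \E_{\mathbf{P}}[f(Y)-f(Y')]$, combining Jensen's inequality with the telescoping bound, and using $\1\{Y_j \neq Y'_j\} \le \1\{Y \neq Y'\}$, I obtain
\begin{align*}
  |Kf(x) - Kf(z)| \;\le\; \sum_{j\in T} \alpha_j \delta_j(f)\, \mathbf{P}(Y_j \neq Y'_j) \;\le\; \theta \sum_{j\in T} \alpha_j \delta_j(f).
\end{align*}
Dividing by $\alpha_i$ and taking the supremum over such $x,z$ yields \eqref{eq:Dobrushin_bound}.

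I do not expect a serious obstacle here: the result is a standard Dobrushin-contraction computation, and the only subtlety is keeping the weighting $\alpha_j$ correctly placed in the telescoping step — the factor $\alpha_j$ multiplies $\delta_j(f)$ because $\rho_j$ already carries the weight, so the final normalization by $\alpha_i = \rho_i(x_i,z_i)$ produces the asymmetric factor $\theta/\alpha_i$ on the right-hand side. A minor point worth double-checking is that the maximal coupling of two Borel probability measures on a Polish space exists and is measurable, which holds under the standing assumptions.
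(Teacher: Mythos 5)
Your argument is correct and is essentially the paper's own proof: both bound $|Kf(x)-Kf(z)|$ for $x,z$ differing in coordinate $i$ by a coordinate-wise telescoping estimate $|f(u)-f(y)|\le\sum_j \alpha_j\delta_j(f)\1\{u_j\neq y_j\}$ under a coupling of $K(x,\cdot)$ and $K(z,\cdot)$, then use $\1\{u_j\neq y_j\}\le\1\{u\neq y\}$ and the coupling characterization of total variation to extract the factor $\theta$. The only cosmetic difference is that you invoke the maximal coupling directly while the paper works with an arbitrary coupling and takes the infimum at the end; these are interchangeable here.
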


\begin{proof} Fix an index $i \in T$ and any two $x,z \in \sX$ that differ only in the $i$th coordinate: $x^{T \bs \{i\}} = z^{T \bs \{i\}}$ and $x_i \neq z_i$. Pick an arbitrary coupling ${\bf P}_{x,z} \in \cC(K(x,\cdot),K(z,\cdot))$. Then
	\begin{align*}
		|Kf(x)-Kf(z)| &= \left|\int_\sX K(x,\d u) f(u) - \int_\sX K(z, \d y) f(y)\right| \\
		&= \left|\int_{\sX
                  \times\sX}
                  {\bf P}_{x,z}(\d u, \d y) \big(f(u)-f(y)\big)\right| \\
		  &\le \sum_{j \in T} \delta_j(f)
                  \int_{\sX\times\sX}
                      {\bf P}_{x,z}(\d u, \d y)
                      \rho_j
                      (u_j,y_j) \\
		    &= \sum_{j \in T} \alpha_j \delta_j(f)
                    \int_{\sX\times\sX}
                        {\bf P}_{x,z}(\d u, \d y) \1{\{u_j \neq y_j\}} \\
		        &\le \sum_{j \in T} \alpha_j \delta_j(f) \cdot
                        \int_{\sX\times\sX}
                            {\bf P}_{x,z}(\d u, \d y) \1\{u \neq y\},
	\end{align*}
where the first inequality is by the definition of $\delta_i(f)$, while the second one follows from the obvious implication $u_j \neq y_j \Rightarrow u \neq y$.
Taking the infimum of both sides over all couplings ${\bf P}_{x,z} \in \cC(K(x,\cdot),K(z,\cdot))$ yields
\begin{align*}
	|Kf(x)-Kf(z)| &\le \sum_{j \in T}\alpha_j \delta_j(f) \cdot \tvnrm{K(x,\cdot)-K(z,\cdot)} \\
	&\le \theta \sum_{j \in T}\alpha_j
        \delta_j(f)
        .
\end{align*}
Finally, dividing both sides of the above inequality by $\alpha_i$ and taking the supremum over all choices of $x,z$ that differ only in the $i$th coordinate, we obtain \eqref{eq:Dobrushin_bound}.
\qed \end{proof}  

One shortcoming of the above result (which is nontrivial only under the rather strong condition
\begin{align}\label{eq:gen_Dobrushin_condition}
\theta < \frac{\alpha_i}{\alpha_j} < \theta^{-1}
\end{align}
for all $i,j \in T$) is that it gives only a very rough idea of the influence of $\delta_j(f)$ for $j \in T$ on $\delta_i(Kf)$. For example, if $\alpha_1 = \ldots = \alpha_n = 1$, then the condition \eqref{eq:gen_Dobrushin_condition} reduces to the Dobrushin contraction condition $\theta < 1$, and the inequality \eqref{eq:Dobrushin_bound} becomes
$$
\delta_i(Kf)
\le \theta \sum_{j \in T} \delta_j(f),
$$
suggesting that all of the $\delta_j(f)$'s influence $\delta_i(Kf)$ equally. However, this picture can be refined. To that end, we introduce the notion of a \textit{Wasserstein matrix} following F\"ollmer \cite{follmer79_tail}. Let us denote by $\bdelta(f)$ the vector $(\delta_i(f))_{i \in T}$. We say that a nonnegative matrix $V = (V_{ij})_{i,j \in T}$ is a Wasserstein matrix for $K$ if, for every $f \in {\rm Lip}(\sX,\rho)$ and for every $i \in T$,
\begin{align}
  \label{eq:deltaKf}
	\delta_i(K f) \le \sum_{j \in T} V_{ij}\delta_j(f),
\end{align}
or, in vector form, if $\bdelta (Kf) \preceq V\bdelta(f)$. 

One of our main objectives will be to show that concentration inequalities for functions $f$ of $X \sim \mu$ can be obtained using Wasserstein matrices for certain Markov kernels $K$ related to $\mu$. In order to motivate the introduction of Wasserstein matrices, we record a couple of contraction estimates for Markov kernels that may be of independent interest. To that end, we introduce another coupling-based distance between probability measures \cite[Chap.~8]{Boucheron-book-2013}: for two Borel probability measures on $\sX$, define
\begin{align}
	\bar{W}(\mu,\nu) \deq \inf_{{\bf P} \in \cC(\mu,\nu)} \sqrt{\sum_{i \in T} \left(\E_{{\bf P}}[\rho_i(X_i,Y_i)]\right)^2},
\end{align}
where $(X,Y)$ is a random element of $\sX \times \sX$. Even though $\bar{W}$ is not a Wasserstein distance, we can use the inequality $\sqrt{a+b} \le \sqrt{a}+\sqrt{b}$ for $a,b \ge 0$ to show that $\bar{W}(\mu,\nu) \le W(\mu,\nu)$.
\begin{proposition}\label{prop:Wasserstein_contraction_bound} Let $V$ be a Wasserstein matrix for a Markov kernel $K$ on $\sX$. Then for any Lipschitz function $f : \sX \to \Reals$,
	\begin{align}\label{eq:Wasserstein_contraction_bound}
		\left| \E_{\mu K}[f(X)] - \E_{\nu K}[f(X)] \right| \le \left\| V \bdelta(f) \right\|_{\ell^2(T)} \bar{W}(\mu,\nu).
	\end{align}
\end{proposition}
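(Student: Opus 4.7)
The plan is to express the left-hand side in terms of $Kf$, invoke an arbitrary coupling of $\mu$ and $\nu$, and then pass from a coordinate-wise Lipschitz bound on $Kf$ to the Wasserstein-matrix bound via Cauchy--Schwarz.

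First, I would use the adjoint identity $\int f \,\d(\mu K) = \int Kf \,\d\mu$ (noted right after the definition of $\mu K$) to rewrite
\begin{align*}
    \E_{\mu K}[f(X)] - \E_{\nu K}[f(X)] = \int_\sX Kf \,\d\mu - \int_\sX Kf \,\d\nu.
\end{align*}
For any coupling $\mathbf{P}\in\cC(\mu,\nu)$ with canonical coordinates $(X,Y)$, this equals $\E_{\mathbf{P}}[Kf(X)-Kf(Y)]$, so by Jensen
\begin{align*}
    \bigl| \E_{\mu K}[f] - \E_{\nu K}[f] \bigr| \le \E_{\mathbf{P}}\bigl[|Kf(X)-Kf(Y)|\bigr].
\end{align*}

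Next, I would establish the coordinate-wise Lipschitz bound
\begin{align*}
    |Kf(x)-Kf(y)| \le \sum_{i\in T}\delta_i(Kf)\,\rho_i(x_i,y_i), \qquad x,y\in\sX,
\end{align*}
by the standard telescoping trick: interpolate from $x$ to $y$ one coordinate at a time and apply the definition of $\delta_i(Kf)$ at each step (each consecutive pair differs in a single coordinate, where $\rho_i$ measures the distance). Plugging in the Wasserstein-matrix bound \eqref{eq:deltaKf}, $\delta_i(Kf)\le\sum_{j}V_{ij}\delta_j(f)$, and taking $\E_{\mathbf{P}}$ yields
\begin{align*}
    \bigl| \E_{\mu K}[f] - \E_{\nu K}[f] \bigr| \le \sum_{i\in T} (V\bdelta(f))_i \, \E_{\mathbf{P}}[\rho_i(X_i,Y_i)].
\end{align*}

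Finally, I would apply the Cauchy--Schwarz inequality on $\ell^2(T)$ to the right-hand side, obtaining
\begin{align*}
    \bigl| \E_{\mu K}[f] - \E_{\nu K}[f] \bigr| \le \|V\bdelta(f)\|_{\ell^2(T)} \sqrt{\sum_{i\in T}\bigl(\E_{\mathbf{P}}[\rho_i(X_i,Y_i)]\bigr)^{2}},
\end{align*}
and then take the infimum over couplings $\mathbf{P}\in\cC(\mu,\nu)$ to recognize the second factor as $\bar{W}(\mu,\nu)$. The only non-routine point is making sure the telescoping bound for $|Kf(x)-Kf(y)|$ really uses only the $\delta_i(Kf)$ (and not a fresh Lipschitz constant along each interpolation path); this is essentially the definition of $\delta_i$ applied $n$ times, but it is worth stating carefully since it is the step that converts the local oscillations into a global modulus of continuity and sets up the Cauchy--Schwarz argument that produces the $\ell^2$-norm in the conclusion.
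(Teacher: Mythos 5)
Your proposal is correct and follows essentially the same route as the paper: rewrite via the adjoint identity $\int f\,\d(\mu K)=\int Kf\,\d\mu$, bound $|Kf(X)-Kf(Y)|$ under an arbitrary coupling using the local oscillations $\delta_i(Kf)$, apply the Wasserstein-matrix inequality and Cauchy--Schwarz, and take the infimum over couplings. The only difference is that you make explicit the coordinate-wise telescoping step $|Kf(x)-Kf(y)|\le\sum_i\delta_i(Kf)\,\rho_i(x_i,y_i)$, which the paper uses implicitly; this is a fair point of care, not a deviation.
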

\begin{proof}
  Fix an arbitrary coupling ${\bf P} \in \cC(\mu,\nu)$ and let $(X,Y)$ be a random element of $\sX \times \sX$ with law ${\bf P}$. Then
	\begin{align*}
		\left|\E_{\mu K}[f(X)] - \E_{\nu K}[f(X)]\right| &= \left|\E_\mu[Kf(X)] - \E_\nu[Kf(X)]\right| \\
		&= \left|\E_{{\bf P}} \left[Kf(X)-Kf(Y)\right]\right| \\
		& \le \sum_{i \in T} \delta_i(K f) \cdot \E_{{\bf P}}[\rho_i(X_i,Y_i)] \\
		& \le \sum_{i \in T} \sum_{j \in T} V_{ij} \delta_j(f) \cdot \E_{{\bf P}}[\rho_i(X_i,Y_i)].
	\end{align*}
	where in the last step we have used the definition of the Wasserstein matrix. Using the Cauchy--Schwarz inequality, we obtain
	\begin{align*}
		\left|\E_{\mu K}[f(X)] - \E_{\nu K}[f(X)]\right| &\le \sqrt{\sum_{i \in T} \Big|\sum_{j \in T}V_{ij}\delta_j(f)\Big|^2 \cdot \sum_{i \in T} \left(\E_{\bf P}[\rho_i(X_i,Y_i)]\right)^2} \\
		&= \left\| V\bdelta(f) \right\|_{\ell^2(T)} \cdot \sqrt{\sum_{i \in T}\left(\E_{\bf P}[\rho_i(X_i,Y_i)]\right)^2}.
	\end{align*}
Taking the infimum of both sides over all ${\bf P} \in \cC(\mu,\nu)$, we obtain \eqref{eq:Wasserstein_contraction_bound}.
\qed \end{proof}

\begin{corollary} Let $V$ be a Wasserstein matrix for a Markov kernel $K$ on $\sX$. Then, for any two Borel probability measures $\mu$ and $\nu$ on $\sX$,
	\begin{align}
		W(\mu K,\nu K) \le \| V\1 \|_{\ell^2(T)} \bar{W}(\mu,\nu),
	\end{align}
	where $\1 \in \Reals^T$ is the vector of all ones, and therefore
	\begin{align*}
		W(\mu K, \nu K) \le \| V \1 \|_{\ell^2(T)} W(\mu,\nu).
	\end{align*}
\end{corollary}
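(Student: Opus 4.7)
The plan is to derive the corollary directly from Proposition~\ref{prop:Wasserstein_contraction_bound} via the Kantorovich--Rubinstein dual representation \eqref{eq:Wasserstein_dual} of the Wasserstein distance $W = W_\rho$. Concretely, I would take an arbitrary $f \in {\rm Lip}_1(\sX,\rho)$ and aim to bound $|\E_{\mu K}[f] - \E_{\nu K}[f]|$ uniformly in $f$ by $\|V\1\|_{\ell^2(T)} \bar{W}(\mu,\nu)$; taking the supremum over such $f$ then yields the first inequality.

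The key observation is that $1$-Lipschitzness with respect to the product metric $\rho$ controls the entire oscillation vector $\bdelta(f)$ coordinatewise. Indeed, if $x,z \in \sX$ differ only in the $j$th coordinate, then by the definition of $\rho$ as the sum of the coordinate metrics, $\rho(x,z) = \rho_j(x_j, z_j)$, so $|f(x) - f(z)| \le \rho_j(x_j,z_j)$, giving $\delta_j(f) \le 1$ for each $j \in T$. In vector form, $\bdelta(f) \preceq \1$. Since $V$ has nonnegative entries, this entrywise inequality is preserved under multiplication by $V$, so $V\bdelta(f) \preceq V\1$ entrywise, and hence
\begin{align*}
\|V\bdelta(f)\|_{\ell^2(T)} \le \|V\1\|_{\ell^2(T)}.
\end{align*}

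Combining this with the conclusion \eqref{eq:Wasserstein_contraction_bound} of Proposition~\ref{prop:Wasserstein_contraction_bound} gives
\begin{align*}
|\E_{\mu K}[f(X)] - \E_{\nu K}[f(X)]| \le \|V\1\|_{\ell^2(T)}\, \bar{W}(\mu,\nu)
\end{align*}
for every $f \in {\rm Lip}_1(\sX,\rho)$. Taking the supremum over $f$ on the left and invoking \eqref{eq:Wasserstein_dual} proves the first inequality. The second inequality is then immediate from the remark, made just before Proposition~\ref{prop:Wasserstein_contraction_bound}, that $\bar{W}(\mu,\nu) \le W(\mu,\nu)$.

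There is no serious obstacle here; the only step that requires a moment of care is the passage $\bdelta(f) \preceq \1$ for $f \in {\rm Lip}_1(\sX,\rho)$, which relies on the specific additive form of the product metric $\rho$. Everything else is a direct application of the preceding proposition and duality.
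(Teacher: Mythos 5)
Your proof is correct and follows essentially the same route as the paper: bound $\bdelta(f) \preceq \1$ for $f \in {\rm Lip}_1(\sX,\rho)$, use nonnegativity of $V$ to get $\|V\bdelta(f)\|_{\ell^2(T)} \le \|V\1\|_{\ell^2(T)}$, apply Proposition~\ref{prop:Wasserstein_contraction_bound}, and conclude via the dual representation \eqref{eq:Wasserstein_dual} together with $\bar{W} \le W$. The paper phrases the last step as a supremum over $\xi \in [0,1]^T$ attained at $\xi = \1$, which is the same observation.
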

\begin{proof} A function $f : \sX \to \Reals$ belongs to ${\rm Lip}_1(\sX,\rho)$ if and only if $ \bdelta(f) \in [0,1]^T$. Using the dual representation \eqref{eq:Wasserstein_dual} of $W$ and applying Proposition~\ref{prop:Wasserstein_contraction_bound}, we can write
	\begin{align*}
		W(\mu K, \nu K) &= \sup_{f \in {\rm Lip}_1(\sX,\rho)} \left| \E_{\mu K}[f(X)] - \E_{\nu K}[f(X)]\right| \\
		&\le \sup_{\xi \in [0,1]^T} \| V \xi \|_{\ell^2(T)} \bar{W}(\mu,\nu).
	\end{align*}
Since $V$ is a nonnegative matrix, the supremum is achieved by $\xi = \1$.
\qed \end{proof}

\subsection{Relative entropy} Finally, we will need some key notions from information theory. The \textit{relative entropy} (or \textit{information divergence}) between two probability measures $\mu,\nu$ on a space $\Omega$ is defined as
\begin{align*}
	D(\nu \| \mu) \deq \begin{cases}
	\displaystyle \int_\Omega \d\mu\, f \log f, & \text{if $\nu \ll \mu$ with $f = \d\nu/\d\mu$} \\
	+ \infty, & \text{otherwise}
\end{cases}.
\end{align*}
We use natural logarithms throughout the chapter. The relative entropy is related to the total variation distance via Pinsker's inequality\footnote{
  Though commonly referred to as {\it Pinsker's inequality},
  (\ref{eq:Pinsker}) as given here (with the optimal constant $\frac12$)
  was proven by Csisz{\'a}r \cite{MR0219345}
  and Kullback
\cite{kullback67}
in 1967.
  }
\begin{align}\label{eq:Pinsker}
	\tvnrm{\mu-\nu} \le \sqrt{\frac{1}{2}D(\mu \| \nu)}.
\end{align}

\section{Concentration of measure and sufficient conditions}
\label{sec:concentration}

In this section, we give a precise definition of the concentration of measure phenomenon, review several sufficient conditions for it to hold, and briefly discuss how it can be established under the independence assumption via tensorization. For more details and further references, the reader can consult \cite{Boucheron-book-2013} or \cite{Raginsky_Sason_FnT}.

We say that the metric probability space $(\sX,\mu,\rho)$ has the concentration of measure property if there exists a positive constant $c > 0$, such that, for every Lipschitz function $f : \sX \to \R$,
\begin{align}\label{eq:COM}
	\PP_\mu\left\{ f(X) - \E_\mu[f(X)] \ge t \right\} \le e^{-t^2/2c\|f\|^2_{\rm Lip}}, \qquad \forall t > 0
\end{align}
where
\begin{align*}
  \| f \|_{\rm Lip} \deq \sup_{
    x,y
    \in \sX \atop x \neq y} \frac{|f(x)-f(y)|}{\rho(x,y)}
\end{align*}
is the Lipschitz constant of $f$. A sufficient (and, up to constants, necessary)
condition for \eqref{eq:COM} is that, for every $f \in {\rm Lip}_1(\sX,\rho)$, the random variable $f(X)$ with  $X \sim \mu$ is \textit{$c$-subgaussian}, i.e.,
\begin{align}\label{eq:SG}
	\log \E_\mu \left[e^{\lambda(f(X)-\E_\mu[f(X)])}\right] \le \frac{c\lambda^2}{2}, \qquad \forall \lambda \in \R.
\end{align}
A fundamental result of Bobkov and G\"otze \cite{Bobkov_Goetze} states that the subgaussian estimate \eqref{eq:SG} holds for all $f \in {\rm Lip}_1(\sX,\rho)$ if and only if $\mu$ satisfies the so-called \textit{transportation-information inequality}
\begin{align}\label{eq:TC}
	W(\mu,\nu) \le \sqrt{2c\, D(\nu \| \mu)},
\end{align}
where $\nu$ ranges over all Borel probability measures on $\sX$. We will use the shorthand $\mu \in T_\rho(c)$ to denote the fact that the inequality \eqref{eq:TC} holds for all $\nu$. The key role of transportation-information inequalities in characterizing the concentration of measure phenomenon was first recognized by Marton in a  breakthrough paper \cite{marton96}, with further developments in \cite{marton98,marton03,marton04}.

The \textit{entropy method} (see, e.g., \cite[Chap.~6]{Boucheron-book-2013} and \cite[Chap.~3]{Raginsky_Sason_FnT}) provides another route to establishing \eqref{eq:SG}. Its underlying idea can be briefly described as follows. Given a measurable function $f : \sX \to \R$, consider the logarithmic moment-generating function
$$
\psi_f(\lambda) \deq \log \E_\mu\left[e^{\lambda(f(X)-\E_\mu[f(X)])}\right]
$$
of the centered random variable $f(X) - \E_\mu[f(X)]$. For any $\lambda \neq 0$, introduce the \textit{tilted probability measure} $\mu^{(\lambda f)}$ with
$$
\frac{\d\mu^{(\lambda f)}}{\d\mu} = \frac{e^{\lambda f}}{\E_\mu[e^{\lambda f}]} = \frac{e^{\lambda (f - \E_\mu f)}}{e^{\psi_f(\lambda)}}.
$$
Then a simple calculation shows that the relative entropy $D(\mu^{(\lambda f)} \| \mu)$ can be expressed as
\begin{align*}
	D(\mu^{(\lambda f)} \| \mu) = \lambda \psi'_f(\lambda) - \psi_f(\lambda) \equiv \lambda^2 \left(\frac{\psi_f(\lambda)}{\lambda}\right)'
\end{align*}
where the prime denotes differentiation with respect to $\lambda$. Using the fact that $\psi_f(0) = 0$ and integrating, we obtain the following formula for $\psi_f(\lambda)$:
\begin{align}\label{eq:psi_entropy}
\psi_f(\lambda) = \lambda \int^\lambda_0 \frac{D(\mu^{(t f)}\| \mu)}{t^2} \d t.
\end{align}
This representation is at the basis of the so-called \textit{Herbst argument}, which for our purposes can be summarized as follows:
\begin{lemma}[Herbst] The metric probability space $(\sX,\mu,\rho)$ has the concentration property with constant $c$ if, for any $f \in {\rm Lip}_1(\sX,\rho)$, 
	\begin{align}\label{eq:Herbst}
	D(\mu^{(t f)} \| \mu) \le \frac{ct^2}{2}, \qquad \forall t > 0.
	\end{align}
\end{lemma}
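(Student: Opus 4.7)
The plan is to combine the integral representation \eqref{eq:psi_entropy} of the log-moment generating function $\psi_f(\lambda)$ with the hypothesis \eqref{eq:Herbst} to obtain a sub-Gaussian bound \eqref{eq:SG}, and then to invoke the standard Chernoff argument to derive \eqref{eq:COM}.

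First I would fix an arbitrary $f \in {\rm Lip}_1(\sX,\rho)$ and $\lambda > 0$. Starting from
\begin{align*}
\psi_f(\lambda) = \lambda \int_0^\lambda \frac{D(\mu^{(tf)}\|\mu)}{t^2}\,\d t,
\end{align*}
I would substitute the assumption $D(\mu^{(tf)}\|\mu) \le ct^2/2$ (valid for all $t>0$) into the integrand to conclude
\begin{align*}
\psi_f(\lambda) \le \lambda \int_0^\lambda \frac{c}{2}\,\d t = \frac{c\lambda^2}{2}.
\end{align*}
For $\lambda < 0$, I would apply the same argument with $f$ replaced by $-f$, which is also in ${\rm Lip}_1(\sX,\rho)$ since the Lipschitz seminorm is insensitive to sign; this yields $\psi_f(\lambda) \le c\lambda^2/2$ for all $\lambda \in \R$, which is precisely \eqref{eq:SG}.

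With the sub-Gaussian estimate in hand, I would close the argument by the usual Chernoff bound: for any $\lambda, t > 0$, Markov's inequality gives
\begin{align*}
\PP_\mu\{f(X) - \E_\mu[f(X)] \ge t\} \le e^{-\lambda t} \E_\mu\!\left[e^{\lambda(f(X)-\E_\mu[f(X)])}\right] \le \exp\!\left(\frac{c\lambda^2}{2} - \lambda t\right),
\end{align*}
and optimizing over $\lambda > 0$ by choosing $\lambda = t/c$ yields the concentration inequality \eqref{eq:COM} with constant $c$ for every $1$-Lipschitz $f$. The extension to general Lipschitz $f$ follows by rescaling: $f/\|f\|_{\rm Lip}$ is $1$-Lipschitz, so applying the $1$-Lipschitz bound to it and substituting back recovers \eqref{eq:COM} with $\|f\|_{\rm Lip}^2$ in the denominator.

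There is no real obstacle here; the proof is a direct verification. The only subtle point worth flagging is that the hypothesis \eqref{eq:Herbst} is stated only for $t>0$, so one must not forget to invoke it separately for $-f$ to cover negative $\lambda$ before the Chernoff step gives the one-sided deviation inequality \eqref{eq:COM}.
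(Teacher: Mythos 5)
Your proof is correct and follows exactly the route the paper intends: it plugs the hypothesis \eqref{eq:Herbst} into the integral representation \eqref{eq:psi_entropy} to get the subgaussian bound \eqref{eq:SG}, and then concludes \eqref{eq:COM} by the Chernoff bound, which is precisely the ``Herbst argument'' the paper sketches just before stating the lemma. Your remark about invoking the hypothesis for $-f$ to cover $\lambda<0$ is a correct and worthwhile clarification (though for the one-sided inequality \eqref{eq:COM} only $\lambda>0$ is actually needed).
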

\begin{remark}
  {Up to a constant, the converse is also true \cite[Prob.~3.12]{Ramon_lecture_notes}: if the subgaussian estimate \eqref{eq:SG} holds for every $f \in {\rm Lip}_1(\sX,\rho)$, then 
	\begin{align*}
		D(\mu^{(t f)} \| \mu) \le 2ct^2, \qquad \forall t > 0
	\end{align*}
for every $f \in {\rm Lip}_1(\sX,\rho)$.}
\end{remark}
In this way, the problem of establishing the concentration phenomenon reduces to showing that \eqref{eq:Herbst} holds for every $f \in {\rm Lip}_1(\sX,\rho)$, typically via logarithmic Sobolev inequalities or other functional inequalities.

\subsection{Concentration of measure under the independence assumption}

To set the stage for the general treatment of the concentration phenomenon in high dimensions, we first consider the independent case, i.e., when coordinates $X_i$, $i \in T$, of the random object $X \sim \mu$ are mutually independent. In other words, the probability measure $\mu$ is equal to the product of its marginals: $\mu = \mu_1 \otimes \ldots \otimes \mu_n$. The key to establishing the concentration property in such a setting is \textit{tensorization}, which is an umbrella term for any result that allows one to derive the ``global" concentration property of the high-dimensional product space $(\sX_1 \otimes \ldots \otimes \sX_n, \rho, \mu_1 \otimes \ldots \otimes \mu_n)$ from ``local'' concentration properties of the coordinate spaces $(\sX_i,\rho_i,\mu_i)$, $i \in T$.

Below, we list two such tensorization results, one for the transportation-information inequalities and one for the relative entropy. Both of these results are deep consequences of the interplay between the independence structure of $\mu$ and the metric structure of $\rho$. Indeed, a function $f : \sX \to \R$ belongs to ${\rm Lip}_1(\sX,\rho)$ if and only if $\delta_i(f) \le 1$ for all $i \in T$, i.e., if and only if, for every $i \in T$ and every $x^{T\backslash\{i\}} \in \sX^{T \backslash \{i\}}$, the function $f_i : \sX_i \to R$ given by $f_i(y_i) \deq f(y_ix^{T\backslash\{i\}})$ is $1$-Lipschitz with respect to the metric $\rho_i$ on $\sX_i$. With this in mind, it is reasonable to expect that if one can establish a concentration property for all $1$-Lipschitz functions on the coordinate spaces $\sX_i$, then one can deduce a concentration property for functions on the product space $\sX$ that are $1$-Lipschitz in each coordinate.

\begin{lemma}[Tensorization of transportation-information inequalities] Suppose that there exist constants $c_1,\ldots,c_n \ge 0$, such that
	$$
	\mu_i \in T_{\rho_i}(c_i), \qquad \forall i \in T.
$$ Then $\mu = \mu_1 \otimes \ldots \otimes \mu_n \in T_\rho(c)$ with $c = \sum^n_{i=1}c_i$.
\end{lemma}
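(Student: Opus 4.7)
The plan is to construct a single coupling of $\mu$ and $\nu$ by chaining optimal couplings of the conditional marginals, then invoke the per-coordinate hypothesis $\mu_i \in T_{\rho_i}(c_i)$ and combine the pieces via the chain rule for relative entropy. Concretely, fix an arbitrary Borel probability measure $\nu$ on $\sX$ and disintegrate it as $\nu(\d x) = \prod_{i=1}^n \nu_i(\d x_i \mid x^{[i-1]})$; for each $i \in T$ and each $y^{[i-1]} \in \sX^{[i-1]}$, pick (measurably in $y^{[i-1]}$) an optimal coupling $\tilde{\pi}_i(\cdot,\cdot \mid y^{[i-1]}) \in \cC(\mu_i, \nu_i(\cdot \mid y^{[i-1]}))$ attaining $W_i(\mu_i, \nu_i(\cdot\mid y^{[i-1]}))$, and disintegrate it as $\tilde{\pi}_i(\d x_i, \d y_i \mid y^{[i-1]}) = \nu_i(\d y_i \mid y^{[i-1]})\, q_i(\d x_i \mid y^{[i]})$. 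Glue the pieces into
\begin{align*}
{\bf P}(\d x, \d y) \deq \nu(\d y) \prod_{i=1}^n q_i(\d x_i \mid y^{[i]}).
\end{align*}
Integrating out $y_n, y_{n-1}, \ldots, y_1$ in turn, each step collapses $q_i(\cdot \mid y^{[i]})\,\nu_i(\d y_i \mid y^{[i-1]})$ to the $x$-marginal $\mu_i$ of $\tilde{\pi}_i$, so the $\sX$-marginal of ${\bf P}$ telescopes to $\prod_i \mu_i = \mu$, and thus ${\bf P} \in \cC(\mu,\nu)$.

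Since $\rho$ is coordinatewise additive and, by a short computation, the conditional law of $(X_i,Y_i)$ given $Y^{[i-1]}$ under ${\bf P}$ is exactly $\tilde{\pi}_i(\cdot,\cdot \mid Y^{[i-1]})$,
\begin{align*}
W(\mu,\nu) \le \E_{{\bf P}}[\rho(X,Y)] = \sum_{i=1}^n \E_{\nu^{[i-1]}}\bigl[W_i(\mu_i, \nu_i(\cdot\mid Y^{[i-1]}))\bigr].
\end{align*}
Applying $\mu_i \in T_{\rho_i}(c_i)$ pointwise in $Y^{[i-1]}$ and then Jensen's inequality (concavity of $\sqrt{\cdot}$) gives $W(\mu,\nu) \le \sum_{i=1}^n \sqrt{2c_i D_i}$, where $D_i \deq \E_{\nu^{[i-1]}}\bigl[D(\nu_i(\cdot\mid Y^{[i-1]}) \| \mu_i)\bigr]$. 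The chain rule for relative entropy yields $\sum_i D_i = D(\nu\|\mu)$, so one final Cauchy--Schwarz,
\begin{align*}
\sum_{i=1}^n \sqrt{2c_i}\,\sqrt{D_i} \le \sqrt{2\sum_{i=1}^n c_i}\,\sqrt{\sum_{i=1}^n D_i} = \sqrt{2c\, D(\nu\|\mu)},
\end{align*}
delivers $\mu \in T_\rho(c)$ as claimed.

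The one delicate point is the gluing construction itself: because each $X_i$ is produced from randomness indexed by $Y^{[i]}$ rather than by $X^{[i-1]}$, it is not obvious a priori that the coordinates of $X$ end up independent with marginals $\mu_i$ after averaging out $Y$, and the product structure of $\mu$ enters exactly through the telescoping integration described above. Everything else is a chain-rule-plus-Cauchy--Schwarz argument of Marton's type, with no further use of the independence assumption.
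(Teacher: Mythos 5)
Your proof is correct: the glued coupling does have marginals $\mu$ and $\nu$ (the telescoping integration you describe is exactly where the product structure of $\mu$ is used), the conditional law of $(X_i,Y_i)$ given $Y^{[i-1]}$ is the optimal coupling by construction, and the Jensen--chain-rule--Cauchy--Schwarz chain delivers the constant $c=\sum_i c_i$ with no loss. Note, however, that the paper does not actually prove this lemma directly; the closest it comes is Theorem~\ref{thm:mart_2}, whose specialization to product measures recovers the statement, and that proof runs along a genuinely different route: it decomposes $f-\E_\mu f$ into martingale differences $M^{(i)}=K^{(i+1)}f-K^{(i)}f$, bounds the conditional Lipschitz constant of each $M^{(i)}$ in the $i$th coordinate, applies the Bobkov--G\"otze equivalence once per coordinate to get conditional subgaussianity from $T_{\rho_i}(c_i)$, and then applies Bobkov--G\"otze again in the reverse direction to convert the resulting subgaussian bound back into $T_\rho(c)$. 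Your argument is Marton's classical one: it stays entirely on the primal (transportation) side, needs no duality, and makes transparent exactly where independence enters --- namely that $\mu_i(\cdot\mid x^{[i-1]})=\mu_i$, so the chain rule collapses to $\sum_i D_i=D(\nu\|\mu)$. What the paper's dual/martingale route buys in exchange is that it generalizes immediately to dependent $\mu$ by inserting a Wasserstein matrix to control how the conditional expectations $K^{(i)}f$ propagate oscillations across coordinates, which is precisely the extra factor $\bigl(\sum_j V^{(i+1)}_{ij}\bigr)^2$ appearing in \eqref{eq:dep_tensor}; your coupling argument would instead require replacing $\mu_i$ by the conditional laws $\mu_i(\cdot\mid x^{[i-1]})$ and then handling the mismatch between conditioning on $x^{[i-1]}$ versus $y^{[i-1]}$, which is where the dependent case becomes delicate. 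The only points worth flagging in your write-up are routine: the measurable selection of optimal couplings (or $\eps$-optimal ones if the infimum is not attained) should be justified, but both are standard for Polish spaces with a lower-semicontinuous metric, as assumed here.
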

For example, by an appropriate rescaling of Pinsker's inequality \eqref{eq:Pinsker}, we see that, if each coordinate space $\sX_i$ is endowed with the scaled trivial metric $\rho_i(x_i,z_i) = \alpha_i {\bf 1}\{x_i \neq z_i\}$ for some $\alpha_i > 0$, then any Borel probability measure $\mu_i$ on $\sX_i$ satisfies the transportation-information inequality with $c_i = \alpha^2_i/4$.  By the above tensorization lemma, any product measure $\mu_1 \otimes \ldots \otimes \mu_n$ on the product space $\sX_1 \otimes \ldots \otimes \sX_n$ equipped with the weighted Hamming metric $\rho_{\boldsymbol{\alpha}}$ defined in \eqref{weighted-ham-def}
satisfies $T_{\rho_{\boldsymbol{\alpha}}}(c)$ with $c = \frac{1}{4}\sum_{i \in T}\alpha^2_i$. Consequently, by the Bobkov--G\"otze theorem, the subgaussian estimate \eqref{eq:SG} holds for any function $f \in {\rm Lip}_1(\sX,\rho_{\boldsymbol{\alpha}})$, which in turn implies, via the Chernoff bound, that
$$
\PP_\mu\Big\{ f - \E_\mu f \ge t \Big\} \le \exp\left(-\frac{2t^2}{\sum_{i \in T}\alpha^2_i}\right), \qquad \forall t \ge 0.
$$
This provides an alternative derivation of McDiarmid's inequality (with the sharp constant in the exponent), which was originally proved using the martingale method.

\begin{lemma}[Tensorization of
    relative
    entropy] Consider a product measure $\mu = \mu_1 \otimes \ldots \otimes \mu_n$. Then for any other probability measure $\nu$ on $\sX$ we have
	\begin{align}
		D(\nu \| \mu) \le \sum_{i \in T} \E_\nu D\Big{(}\nu_i(\cdot|X^{T\backslash\{i\}}) \| \mu_i\Big{)}.
	\end{align}
\end{lemma}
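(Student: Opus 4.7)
The plan is to combine the standard chain rule for relative entropy with Jensen's inequality (i.e.\ joint convexity of $D(\cdot\|\cdot)$). The chain rule, applied to any $\nu$ and any $\mu$ along the linear ordering of $T = [n]$, gives the identity
\begin{align*}
D(\nu \| \mu) = \sum_{i \in T} \E_\nu D\Big(\nu_i(\cdot | X^{[i-1]}) \,\Big\|\, \mu_i(\cdot | X^{[i-1]})\Big).
\end{align*}
Since $\mu$ is a product measure, $\mu_i(\cdot | X^{[i-1]}) = \mu_i$, so each summand simplifies to $\E_\nu D(\nu_i(\cdot | X^{[i-1]}) \| \mu_i)$. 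This reduces the problem to the per-coordinate inequality
\begin{align*}
\E_\nu D\Big(\nu_i(\cdot | X^{[i-1]}) \,\Big\|\, \mu_i\Big) \le \E_\nu D\Big(\nu_i(\cdot | X^{T\backslash\{i\}}) \,\Big\|\, \mu_i\Big), \qquad i \in T.
\end{align*}

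The key observation for the per-coordinate step is that conditioning is a projection: under $\nu$, the tower property yields
\begin{align*}
\nu_i(A | X^{[i-1]}) = \E_\nu \Big[ \nu_i(A | X^{T\backslash\{i\}}) \,\Big|\, X^{[i-1]} \Big]
\end{align*}
for every measurable $A \subseteq \sX_i$. Thus the conditional distribution $\nu_i(\cdot | X^{[i-1]})$ is a $\nu$-average of the finer conditionals $\nu_i(\cdot | X^{T\backslash\{i\}})$ over the ``future'' coordinates $X^{(i,n]}$. Applying Jensen's inequality to the jointly convex map $\kappa \mapsto D(\kappa \| \mu_i)$ then gives
\begin{align*}
D\Big(\nu_i(\cdot | X^{[i-1]}) \,\Big\|\, \mu_i\Big) \le \E_\nu \Big[ D\big(\nu_i(\cdot | X^{T\backslash\{i\}}) \,\big\|\, \mu_i\big) \,\Big|\, X^{[i-1]}\Big].
\end{align*}
Taking $\E_\nu[\cdot]$ on both sides and using the tower property once more yields the desired per-coordinate inequality, and summing over $i \in T$ completes the proof.

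I do not expect any real obstacle here: the argument is routine once one recognizes that the \emph{symmetric} conditioning on $X^{T \backslash \{i\}}$ can only increase each summand in the chain-rule decomposition that uses the \emph{asymmetric} conditioning on $X^{[i-1]}$. The only minor subtlety is justifying the identity $\nu_i(\cdot | X^{[i-1]}) = \E_\nu[\nu_i(\cdot | X^{T\backslash\{i\}}) | X^{[i-1]}]$ at the level of regular conditional probabilities, which is standard in the Polish setting assumed in Section~\ref{sec:prelims}.
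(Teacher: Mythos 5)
Your proof is correct, and since the paper states this lemma without proof (citing it as a standard tensorization result), there is nothing to compare against line by line; your argument is the standard one. The chain rule along the ordering of $T$, the simplification $\mu_i(\cdot|X^{[i-1]})=\mu_i$ for product $\mu$, and the tower-property identity $\nu_i(\cdot|X^{[i-1]})=\E_\nu[\nu_i(\cdot|X^{T\bs\{i\}})\,|\,X^{[i-1]}]$ are all valid, and the Jensen step is sound because $\kappa\mapsto D(\kappa\|\mu_i)$ is convex. Two small remarks. First, to make Jensen's inequality rigorous for a measure-valued mixture over a continuum of conditioning values, the cleanest route is the Donsker--Varadhan representation $D(\kappa\|\mu_i)=\sup_g\{\int g\,\d\kappa-\log\int e^g\d\mu_i\}$: each functional in the supremum is affine in $\kappa$, so the inequality passes through the conditional expectation. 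Second, your per-coordinate inequality is in fact an identity plus a nonnegative remainder, namely
\begin{align*}
\E_\nu D\big(\nu_i(\cdot|X^{T\bs\{i\}})\,\big\|\,\mu_i\big)-\E_\nu D\big(\nu_i(\cdot|X^{[i-1]})\,\big\|\,\mu_i\big)
=\E_\nu D\big(\nu_i(\cdot|X^{T\bs\{i\}})\,\big\|\,\nu_i(\cdot|X^{[i-1]})\big)\ge 0,
\end{align*}
which quantifies exactly what is given up by passing from the asymmetric to the symmetric conditioning; this identity is sometimes a more convenient form of the same argument.
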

The idea is to apply this lemma to $\nu = \mu^{(t f)}$ for some $t \ge 0$ and an arbitrary $f \in {\rm Lip}_1(\sX,\rho)$. In that case, a simple calculation shows that the conditional probability measure
$\nu_i(\d x_i |x^{T \backslash \{i\}}) = \mu^{(tf)}_i(\d x_i|x^{T\backslash\{i\}})$ is equal to the tilted distribution $\mu^{(tf_i)}_i$ with $f_i(x_i) = f(x_ix^{T\backslash \{i\}})$, and therefore
$$ 
D(\mu^{(tf)} \| \mu) \le \sum^n_{i=1} \E_{\mu^{(tf)}} D\big(\mu^{(t f_i)}_i \big\| \mu_i \big).
$$
If $f \in {\rm Lip}_1(\sX,\rho)$, then $f_i \in {\rm Lip}_1(\sX_i,\rho_i)$. Thus, if we can show that, for any $g \in {\rm Lip}_1(\sX_i,\rho_i)$, 
$$
D(\mu^{(t g)}_i \| \mu_i) \le \frac{c_i t^2}{2}, \qquad \forall t \ge 0,
$$
then the estimate
$$
D(\mu^{(tf)} \| \mu) \le \frac{ct^2}{2}, \qquad \forall t \ge 0
$$
holds with $c = \sum^n_{i=1}c_i$ for all $f \in {\rm Lip}(\sX,\rho)$ by the tensorization lemma. Invoking the Herbst argument, we conclude that $(\sX,\mu,\rho)$ has the concentration property with the same $c$.

\section{The abstract martingale method}
\label{sec:martingale}

In this section, we present a general martingale-based scheme for deriving concentration inequalities for functions of many dependent random variables. Let $f : \sX \to \Reals$ be the function of interest, and let $X = (X_i)_{i \in T}$ be a random element of the product space $\sX$ with probability law $\mu$. Let $\cF_0 \subset \cF_1 \subset \ldots \subset \cF_m$ be a filtration (i.e., an increasing sequence of $\sigma$-fields) on $\sX$, such that $\cF_0$ is trivial and $\cF_m = \cB$. The idea is to decompose the centered random variable $f(X)-\E_\mu[f(X)]$ as a sum of martingale differences
$$
M^{(j)} \deq \E_\mu[f(X)|\cF_j] - \E_\mu[f(X)|\cF_{j-1}], \qquad j = 1,\ldots,m.
$$
By construction, $\E_\mu[f(X)|\cF_m] = f(X)$ and $\E_\mu[f(X)|\cF_0] = \E_\mu[f(X)]$, so the problem of bounding the probability $\PP_\mu\big\{|f-\E_\mu f| \ge t\big\}$ for a given $t \ge 0$ reduces to bounding the probability
$$
 \PP_\mu\Bigg\{ \Big|\sum^m_{j=1} M^{(j)}\Big| \ge t\Bigg\}.
$$
The latter problem hinges on being able to control the martingale differences $M^{(j)}$. In particular, if each $M^{(j)}$ is a.s.\ bounded, we have the following:

\begin{theorem}[Azuma--Hoeffding inequality] Let $\{M^{(j)}\}^m_{j=1}$ be a martingale difference sequence with respect to a filtration $\{\cF_j\}^m_{j=0}$. Suppose that, for each $j$, there exist $\cF_{j-1}$-measurable random variables $A^{(j)}$ and $B^{(j)}$, such that $A^{(j)} \le M^{(j)} \le B^{(j)}$ a.s. Then 
	\begin{align}\label{eq:exponential_AH}
		\E\left[\exp\left(\lambda\sum^m_{j=1}M^{(j)}\right)\right] \le \exp\left(\frac{\lambda^2\sum^m_{j=1}\| B^{(j)}-A^{(j)}\|^2_\infty}{8}\right), \quad \forall \lambda \in \R.
	\end{align}
Consequently, for any $t \ge 0$,
	\begin{align}\label{eq:AH}
		\PP\Bigg\{\Big| \sum^m_{j=1} M^{(j)}\Big| \ge t\Bigg\} \le 2 \exp\left(-\frac{2t^2}{\sum^m_{j=1} \|B^{(j)}-A^{(j)}\|^2_\infty}\right).
	\end{align}
\end{theorem}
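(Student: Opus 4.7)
The plan is to prove \eqref{eq:exponential_AH} first, since \eqref{eq:AH} follows routinely from it by a Chernoff argument, and then handle the tail bound separately. The backbone is the standard peeling argument: writing $S_j \deq \sum_{k=1}^{j} M^{(k)}$, I would condition on $\cF_{m-1}$ in $\E[\exp(\lambda S_m)] = \E[\exp(\lambda S_{m-1})\E[\exp(\lambda M^{(m)})\mid \cF_{m-1}]]$ and seek an a.s.\ bound on the inner conditional MGF in terms of $\|B^{(m)}-A^{(m)}\|_\infty$. Iterating down to $j=1$ then produces the product $\prod_{j=1}^m \exp(\lambda^2 \|B^{(j)}-A^{(j)}\|_\infty^2/8)$, which is exactly \eqref{eq:exponential_AH}.

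The workhorse for the inner bound is Hoeffding's lemma applied conditionally: if $Z$ is a random variable with $\E[Z\mid \cG]=0$ and $a \le Z \le b$ a.s.\ for $\cG$-measurable $a,b$, then $\E[\exp(\lambda Z)\mid \cG] \le \exp(\lambda^2 (b-a)^2/8)$. I would prove this by the usual convexity trick: since $\exp(\lambda \cdot)$ is convex, for $z \in [a,b]$ one has $e^{\lambda z} \le \frac{b-z}{b-a} e^{\lambda a} + \frac{z-a}{b-a} e^{\lambda b}$, and taking conditional expectations (using $\E[Z\mid\cG]=0$) yields $\E[e^{\lambda Z}\mid\cG] \le e^{\psi(\lambda(b-a))}$ where $\psi(u) = -pu + \log(1-p+pe^u)$ for $p = -a/(b-a) \in [0,1]$. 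A second-order Taylor expansion of $\psi$ around $0$ shows $\psi(u) \le u^2/8$, which closes the estimate. Applied with $\cG = \cF_{j-1}$, $Z = M^{(j)}$, $a = A^{(j)}$, $b = B^{(j)}$, this gives the per-step bound a.s., and the peeling iteration completes \eqref{eq:exponential_AH}.

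For \eqref{eq:AH}, I would combine \eqref{eq:exponential_AH} with Markov's inequality: for $\lambda > 0$,
\begin{align*}
\PP\Bigg\{\sum_{j=1}^m M^{(j)} \ge t\Bigg\} \le e^{-\lambda t}\,\E\Bigg[\exp\Bigg(\lambda\sum_{j=1}^m M^{(j)}\Bigg)\Bigg] \le \exp\Bigg(-\lambda t + \frac{\lambda^2}{8}\sum_{j=1}^m \|B^{(j)}-A^{(j)}\|_\infty^2\Bigg).
\end{align*}
Optimizing by choosing $\lambda = 4t / \sum_j \|B^{(j)}-A^{(j)}\|_\infty^2$ yields the one-sided bound $\exp(-2t^2/\sum_j \|B^{(j)}-A^{(j)}\|_\infty^2)$. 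The reverse tail is obtained by the same argument applied to $\{-M^{(j)}\}$, which is itself a martingale difference sequence with $-B^{(j)} \le -M^{(j)} \le -A^{(j)}$, so the bound is symmetric. A union bound over the two events produces the factor of $2$ in \eqref{eq:AH}.

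The only genuine technical step is Hoeffding's lemma, and even that is routine convex-analysis; the main subtlety worth flagging is that the bounds $A^{(j)}, B^{(j)}$ are themselves random and only $\cF_{j-1}$-measurable, so one must be careful to apply the lemma \emph{conditionally} and to note that the resulting a.s.\ bound $\exp(\lambda^2 (B^{(j)}-A^{(j)})^2/8) \le \exp(\lambda^2 \|B^{(j)}-A^{(j)}\|_\infty^2/8)$ is non-random, which is what makes the iterated expectations telescope cleanly.
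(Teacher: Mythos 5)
Your proof is correct and complete: the conditional Hoeffding lemma via the convexity interpolation, the peeling/tower-property iteration, the Chernoff optimization with $\lambda = 4t/\sum_j\|B^{(j)}-A^{(j)}\|^2_\infty$, and the union bound for the two-sided tail are exactly the standard argument for this classical result, which the paper states without proof. Your remark about applying Hoeffding's lemma conditionally (since $A^{(j)},B^{(j)}$ are random but $\cF_{j-1}$-measurable) and then passing to the deterministic bound $\|B^{(j)}-A^{(j)}\|_\infty$ so that the iterated expectations telescope is precisely the right point to be careful about.
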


The most straightforward choice of the filtration is also the most natural one: take $m = |T| = n$, and for each $i \in T$ take $\cF_i = \sigma(X^{[i]})$. For $i \in T$, define a Markov kernel $K^{(i)}$ on $\sX$ by
\begin{align}\label{eq:Ki_def}
	K^{(i)}(x, \d y) \deq \delta_{x^{[i-1]}}(\d y^{[i-1]}) \otimes \mu^{[i,n]}(\d y^{[i,n]} |x^{[i-1]}).
\end{align}
Then, for any $f \in L^1(\mu)$ we have
\begin{align*}
	K^{(i)} f(x) &= \int_\sX f(y) K^{(i)}(x, \d y) \\
	&= \int_{\sX^{[i,n]}} f(x^{[i-1]}y^{[i,n]}) \mu^{[i,n]}(\d y^{[i,n]} | x^{[i-1]}) \\
	&= \E_\mu[f(X)|X^{[i-1]}=x^{[i-1]}];
\end{align*}
in particular, $K^{(1)} f = \E_\mu f$. We extend this definition to $i = n+1$ in the obvious way:
\begin{align*}
K^{(n+1)}(x,\d y) = \delta_{x}(\d y),
\end{align*}
so that $K^{(n+1)}f = f$. Then, for each $i \in T$, we can write $M^{(i)} = K^{(i+1)}f - K^{(i)}f$. With this construction, we can state the following theorem that applies to the case when each coordinate space $\sX_i$ is endowed with a bounded measurable metric $\rho_i$:

\begin{theorem}\label{thm:mart_1} Assume that, for all $i$,
\begin{align*}
\| \rho_i \| \deq \sup_{x_i,z_i \in \sX_i} \rho_i(x_i,z_i) < \infty.
\end{align*} For each $i \in \{1,\ldots,n+1\}$, let $V^{(i)}$ be a
Wasserstein matrix for the Markov kernel $K^{(i)}$ defined in \eqref{eq:Ki_def},
in the sense that $\bdelta (K^{(i)}f) \preceq V^{(i)}\bdelta(f)$
holds for each $f\in {\rm Lip}(\sX,\rho)$
as in (\ref{eq:deltaKf}).
Define the matrix $\Gamma = (\Gamma_{ij})_{i,j \in T}$ with entries
	\begin{align*}
		\Gamma_{ij} \deq \| \rho_i \| V^{(i+1)}_{ij}.
	\end{align*}
        Then, for any $f \in {\rm Lip}(\sX,\rho)$
        and  for any $t \ge 0$, we have
\begin{align}\label{eq:martingale_bound}
	\PP_\mu\Big\{ |f(X) - \E_\mu[f(X)]| \ge t \Big\} \le 2 \exp\left(-\frac{2t^2}{\| \Gamma \bdelta(f) \|^2_{\ell^2(T)}}\right).
\end{align}
\end{theorem}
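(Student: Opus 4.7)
The plan is to decompose the centered random variable $f(X) - \E_\mu[f(X)]$ as a telescoping sum of martingale differences
\begin{align*}
	f(X) - \E_\mu[f(X)] = \sum_{i=1}^n M^{(i)}, \qquad M^{(i)} \deq K^{(i+1)}f(X) - K^{(i)}f(X),
\end{align*}
adapted to the filtration $\cF_i \deq \sigma(X^{[i]})$, and then to apply the Azuma--Hoeffding inequality \eqref{eq:AH}. Since $K^{(i)}f(X) = \E_\mu[K^{(i+1)}f(X) \mid \cF_{i-1}]$ by construction, the sequence $\{M^{(i)}\}$ is a genuine martingale difference sequence, so the desired bound will follow once I produce, for each $i \in T$, a deterministic constant $c_i$ such that $M^{(i)}$ lies in an interval of length $c_i$ conditionally on $\cF_{i-1}$, and then verify that $c_i = (\Gamma \bdelta(f))_i$.

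The key structural fact is that, by the very definition of $K^{(i+1)}$ in \eqref{eq:Ki_def}, the function $K^{(i+1)}f$ depends on its argument only through the first $i$ coordinates. In particular, once $X^{[i-1]}$ is fixed, $K^{(i+1)}f(X)$ becomes a measurable function of $X_i$ alone. Using the definition of the local oscillation $\delta_i$ and the standing assumption $\|\rho_i\| < \infty$, the total variation of this function over $\sX_i$ is bounded by
\begin{align*}
	\|\rho_i\| \cdot \delta_i(K^{(i+1)}f) \le \|\rho_i\| \sum_{j \in T} V^{(i+1)}_{ij}\, \delta_j(f) = (\Gamma \bdelta(f))_i,
\end{align*}
where the inequality is exactly the defining property of the Wasserstein matrix $V^{(i+1)}$. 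Since $K^{(i)}f(X)$ is the $\cF_{i-1}$-conditional expectation of $K^{(i+1)}f(X)$, it lies between the conditional extrema of $K^{(i+1)}f(X)$, so $M^{(i)}$ is sandwiched between $\cF_{i-1}$-measurable envelopes $A^{(i)} \le M^{(i)} \le B^{(i)}$ whose gap is bounded a.s.\ by $(\Gamma \bdelta(f))_i$.

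With $\|B^{(i)} - A^{(i)}\|_\infty \le (\Gamma \bdelta(f))_i$ in hand for every $i \in T$, the Azuma--Hoeffding bound \eqref{eq:AH} immediately yields
\begin{align*}
	\PP_\mu\big\{|f(X) - \E_\mu[f(X)]| \ge t\big\} \le 2\exp\!\left(-\frac{2t^2}{\sum_{i \in T} (\Gamma \bdelta(f))_i^2}\right) = 2\exp\!\left(-\frac{2t^2}{\|\Gamma \bdelta(f)\|_{\ell^2(T)}^2}\right),
\end{align*}
which is \eqref{eq:martingale_bound}. The Doob martingale decomposition and the invocation of Azuma--Hoeffding are entirely standard; the substantive step, and hence the main obstacle, is the middle one, where the Wasserstein-matrix hypothesis converts what would otherwise be a one-coordinate Lipschitz estimate into an aggregate bound on $\delta_i(K^{(i+1)}f)$ that mixes the contributions of all of the $\delta_j(f)$'s through the $i$-th row of $V^{(i+1)}$. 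The boundedness assumption $\|\rho_i\| < \infty$ enters only at this juncture, as the mechanism that turns a Lipschitz estimate in the $i$-th coordinate into a pointwise range bound suitable for Azuma--Hoeffding.
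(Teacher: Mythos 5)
Your proposal is correct and follows essentially the same route as the paper: the Doob martingale with respect to $\cF_i = \sigma(X^{[i]})$, the bound $\|B^{(i)}-A^{(i)}\|_\infty \le \|\rho_i\|\,\delta_i(K^{(i+1)}f)$ on the conditional range of $M^{(i)}$, the Wasserstein-matrix estimate $\delta_i(K^{(i+1)}f) \le \sum_j V^{(i+1)}_{ij}\delta_j(f)$, and then Azuma--Hoeffding. Your observation that $K^{(i+1)}f$ depends only on the first $i$ coordinates is a slightly tidier way of arriving at the same envelopes the paper constructs by writing $M^{(i)}$ as an explicit integral against $\mu^{[i,n]}(\cdot\,|\,x^{[i-1]})$.
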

\begin{proof} For each $i \in T$, using the tower property of conditional expectations, we can write
	\begin{align*}
		M^{(i)} &= \E_\mu[f(X)|X^{[i]} = x^{[i]}] - \E_\mu[f(X)|X^{[i-1]} = x^{[i-1]}] \nonumber\\
		&= \E_\mu[f(X)|X^{[i]}=x^{[i]}] - \E_\mu\big[\E_\mu[f(X)|X^{[i-1]}=x^{[i-1]},X_i]\big|X^{[i-1]}=x^{[i-1]}\big] \nonumber\\
		&= \int_{\sX^{[i,n]}} \mu^{[i,n]}(\d y^{[i,n]}|x^{[i-1]}) \Big(\int_{\sX^{(i,n]}} \mu^{(i,n]}(\d z^{(i,n]}|x^{[i]})f(x^{[i-1]}x_iz^{(i,n]}) \nonumber\\
		& \qquad \qquad \qquad- \int_{\sX^{(i,n]}} \mu^{(i,n]}(\d z^{(i,n]}|x^{[i-1]}y_i) f(x^{[i-1]}y_iz^{(i,n]})\Big) \nonumber\\
		&= \int_{\sX^{[i,n]}} \mu^{[i,n]}(\d y^{[i,n]}|x^{[i-1]})\left(K^{(i+1)}f(x^{[i-1]}x_iy^{(i,n]}) - K^{(i+1)}f(x^{[i-1]}y_iy^{(i,n]})\right).
	\end{align*}
        From this, it follows that $A^{(i)} \le M^{(i)} \le B^{(i)}$ a.s., where 
\begin{align*}
	A^{(i)} &\deq \int_{\sX^{[i,n]}} \mu^{[i,n]}(\d y^{[i,n]}|x^{[i-1]})\inf_{x_i \in \sX_i}\left(K^{(i+1)}f(x^{[i-1]}x_iy^{(i,n]}) - K^{(i+1)}f(x^{[i-1]}y_iy^{(i,n]})\right) \\
	B^{(i)} &\deq \int_{\sX^{[i,n]}} \mu^{[i,n]}(\d y^{[i,n]}|x^{[i-1]})\sup_{x_i \in \sX_i}\left(K^{(i+1)}f(x^{[i-1]}x_iy^{(i,n]}) - K^{(i+1)}f(x^{[i-1]}y_iy^{(i,n]})\right),
\end{align*}
and
\begin{align}\label{eq:cond_range}
	\| B^{(i)} - A^{(i)} \|_\infty \le \| \rho_i \| \delta_i\big(K^{(i+1)}f\big).
\end{align}
By definition of the Wasserstein matrix, we have
	\begin{align*}
		\delta_i\left(K^{(i+1)}f\right) \le \sum_{j \in T} V^{(i+1)}_{ij} \delta_j(f).
	\end{align*}
Substituting this estimate into \eqref{eq:cond_range}, we get
	\begin{align}
		\sum^n_{i=1} \| B^{(i)} - A^{(i)} \|^2_\infty \le \sum^n_{i=1} \left|\left( \Gamma \bdelta(f)\right)_i\right|^2 \equiv \left\| \Gamma \bdelta(f) \right\|^2_{\ell^2(T)}.
	\end{align}
The probability estimate \eqref{eq:martingale_bound} then follows from the Azuma--Hoeffding inequality \eqref{eq:AH}.
\qed \end{proof}

We can also use the martingale method to obtain a tensorization result for transportation inequalities without independence assumptions. This result, which generalizes a theorem of Djellout, Guillin, and Wu \cite[Thm.~2.11]{djellout_guillin_wu}, can be used even when the metrics $\rho_i$ are not necessarily bounded.

\begin{theorem}\label{thm:mart_2} Suppose that there exist constants $c_1,\ldots,c_n \ge 0$, such that
\begin{align}\label{eq:TC_i}
	\mu_i(\cdot|x^{[i-1]}) \in T_{\rho_i}(c_i), \qquad \forall i \in T,\, x^{[i-1]} \in \sX^{[i-1]}.
\end{align}
For each $i \in \{1,\ldots,n+1\}$, let $V^{(i)}$ be a Wasserstein matrix for $K^{(i)}$. Then $\mu \in T_\rho(c)$ with
\begin{align}\label{eq:dep_tensor}
	c = \sum_{i \in T} c_i \Big(\sum_{j \in T} V^{(i+1)}_{ij}\Big)^2.
\end{align}
\end{theorem}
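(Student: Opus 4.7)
The plan is to establish $\mu \in T_\rho(c)$ via its equivalent subgaussian formulation furnished by the Bobkov--Götze theorem: it suffices to show that for every $f \in {\rm Lip}_1(\sX,\rho)$ and every $\lambda \in \R$,
\[
\log \E_\mu\bigl[e^{\lambda(f(X)-\E_\mu f(X))}\bigr] \le \frac{c\lambda^2}{2}
\]
with $c$ as in \eqref{eq:dep_tensor}. To this end, I will reuse the martingale decomposition from the proof of Theorem~\ref{thm:mart_1}, but instead of bounding the increments in sup-norm à la Azuma--Hoeffding, I will bound their conditional moment-generating functions using the coordinate-wise transportation hypothesis \eqref{eq:TC_i}.

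Concretely, write $f(X)-\E_\mu f(X) = \sum_{i=1}^n M^{(i)}$ with $M^{(i)} = K^{(i+1)}f - K^{(i)}f$, adapted to the filtration $\cF_i = \sigma(X^{[i]})$. The key observation is that, conditional on $X^{[i-1]} = x^{[i-1]}$, the increment $M^{(i)}$ depends only on $X_i$: it equals $g_i(X_i) - \int g_i\, \d\mu_i(\cdot|x^{[i-1]})$, where $g_i(x_i) \deq K^{(i+1)}f(x^{[i]})$ is viewed as a function of $x_i$ alone with $x^{[i-1]}$ frozen. The Wasserstein-matrix property $\bdelta(K^{(i+1)}f) \preceq V^{(i+1)}\bdelta(f)$ combined with $\bdelta(f) \preceq \1$ (which holds because $f \in {\rm Lip}_1$) shows that $g_i$ is $L_i$-Lipschitz with respect to $\rho_i$, where $L_i \le \sum_{j \in T} V^{(i+1)}_{ij}$.

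Now invoke the Bobkov--Götze theorem in the forward direction on the metric probability space $(\sX_i, \mu_i(\cdot|x^{[i-1]}), \rho_i)$: by \eqref{eq:TC_i} applied to the $1$-Lipschitz function $g_i/L_i$ and then rescaling, we obtain the conditional subgaussian bound
\[
\log \E_\mu\bigl[e^{\lambda M^{(i)}}\bigm|\cF_{i-1}\bigr] \le \frac{c_i L_i^2 \lambda^2}{2} \le \frac{c_i \bigl(\sum_{j \in T} V^{(i+1)}_{ij}\bigr)^2 \lambda^2}{2}
\]
almost surely, and crucially the right-hand side is deterministic.

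Iterating the tower property for $i = n, n-1, \dots, 1$ (at each step pulling out the deterministic conditional MGF bound and reducing to the partial sum $\sum_{k<i} M^{(k)}$, which is $\cF_{i-1}$-measurable) produces
\[
\log \E_\mu\bigl[e^{\lambda\sum_{i=1}^n M^{(i)}}\bigr] \le \frac{\lambda^2}{2}\sum_{i\in T} c_i \Bigl(\sum_{j \in T} V^{(i+1)}_{ij}\Bigr)^2 = \frac{c\lambda^2}{2},
\]
and a second application of Bobkov--Götze, now in the reverse direction on $(\sX,\mu,\rho)$, delivers $\mu \in T_\rho(c)$. The only substantive point requiring care is the identification of the conditional Lipschitz constant of $g_i$ from the Wasserstein-matrix inequality \eqref{eq:deltaKf}; once that is in place the argument is a clean analogue of the classical independent-case tensorization, with the Wasserstein matrix precisely accounting for how the local variability of $f$ in coordinate $j$ propagates into the $i$-th martingale increment via the conditional expectation $K^{(i+1)}f$.
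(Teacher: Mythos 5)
Your proposal is correct and follows essentially the same route as the paper's proof: reduce to a subgaussian bound via Bobkov--Götze, decompose into the martingale differences $M^{(i)} = K^{(i+1)}f - K^{(i)}f$, observe that conditionally on $X^{[i-1]}$ the increment is a centered Lipschitz function of $X_i$ alone with Lipschitz constant $\delta_i(K^{(i+1)}f) \le \sum_{j} V^{(i+1)}_{ij}$, and apply the coordinate-wise transportation hypothesis together with the tower property. Your write-up is in fact slightly more explicit than the paper's about the identification $M^{(i)} = g_i(X_i) - \int g_i \,\d\mu_i(\cdot|x^{[i-1]})$ and the final tower-property iteration, but there is no substantive difference in method.
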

\begin{proof} By the Bobkov--G\"otze theorem \cite{Bobkov_Goetze}, it suffices to show that, for every $f : \sX \to \R$ with $\| f \|_{\rm Lip} \le 1$, the random variable $f(X)$ with $X \sim \mu$ is $c$-subgaussian, with $c$ given by \eqref{eq:dep_tensor}. To that end, we again consider the martingale decomposition
	$$
	f - \E_\mu[f] = \sum_{i \in T} M^{(i)}
	$$
	with $M^{(i)} = K^{(i+1)}f-K^{(i)}f$. We will show that, for every $i$,
	\begin{align}\label{eq:ith_term}
		\log \E_\mu\Big[ e^{\lambda M^{(i)}}\Big|X^{[i-1]}\Big] \le \frac{c_i\Big(\sum_{j \in T}V^{(i+1)}_{ij}\Big)^2\lambda^2}{2}, \qquad \forall \lambda \in \R.
	\end{align}
This, in turn, will yield the desired subgaussian estimate
\begin{align*}
	\E_\mu\left[e^{\lambda (f - \E_\mu[f])}\right] &= \E_\mu\left[\exp\left(\lambda \sum_{i \in T}M^{(i)}\right)\right]\\
	&\le \exp\left(\frac{c\lambda^2}{2}\right)
\end{align*}
for every $\lambda \in \R$.

To proceed, note that, for a fixed realization $x^{[i-1]}$ of $X^{[i-1]}$, $M^{(i)} = K^{(i+1)}f - K^{(i)}f$ is $\sigma(X_i)$-measurable, and
\begin{align*}
	\| M^{(i)} \|_{\rm Lip} &\le \sup_{x,y \in \sX \atop x^{T\bs \{i\}} = y^{T\bs \{i\}}} \frac{\left|K^{(i+1)}f(x)-K^{(i+1)}f(y)\right|}{\rho_i(x_i,y_i)} \\
	&\equiv \delta_i\big(K^{(i+1)}f\big) \\
	&\le \sum_{j \in T} V^{(i+1)}_{ij}\delta_j(f) \\
	&\le \sum_{j \in T} V^{(i+1)}_{ij},
\end{align*}
where we have used the definition of the Wasserstein matrix, as well as the fact that $\| f \|_{\rm Lip} \le 1$ is equivalent to $\delta_j(f) \le 1$ for all $j \in T$. Since $\mu_i(\cdot|x^{[i-1]}) \in T_{\rho_i}(c)$ by hypothesis, we obtain the estimate \eqref{eq:ith_term} by the Bobkov--G\"otze theorem.
\qed \end{proof}

As a sanity check, let us confirm that, in the case when $\mu$ is a product measure and the product space $\sX$ is endowed with the weighted Hamming metric $\rho_{{\boldsymbol{\alpha}}}$ defined in \eqref{weighted-ham-def}, Theorems~\ref{thm:mart_1} and \ref{thm:mart_2} both reduce to McDiarmid's inequality. To see this, we first note that, when the $X_i$'s are independent, we can write
$$
K^{(i)} f (x) = \int_{\sX^{[i,n]}} f(x^{[i-1]}y^{[i,n]}) \mu_i(\d y_i) \mu_{i+1}(\d y_{i+1}) \ldots \mu_n(\d y_n)
$$
for each $i \in T$, $f \in L^1(\mu)$, and $x \in \sX$. This, in turn, implies that
\begin{align*}
\delta_i(K^{(i+1)}f) &= \alpha^{-1}_i\sup_{x,z \in \sX \atop x^{T\backslash\{i\}} = z^{T\backslash\{i\}}} \left| K^{(i+1)}f(x)-K^{(i+1)}f(z)\right| \\
&= \alpha^{-1}_i\sup_{x,z \in \sX \atop x^{T\backslash\{i\}} = z^{T\backslash\{i\}}} \Big|  \int_{\sX^{(i,n]}} f(x^{[i]}y^{(i,n]}) \mu_{i+1}(\d y_{i+1})  \ldots \mu_n(\d y_n)\\
& \qquad \qquad \qquad -  \int_{\sX^{(i,n]}} f(z^{[i]}y^{(i,n]}) \mu_{i+1}(\d y_{i+1}) \ldots \mu_n(\d y_n)\Big| \\
&\le \alpha^{-1}_i \sup_{x,z \in \sX \atop x^{T\backslash\{i\}} = z^{T\backslash\{i\}}} \left|f(x)-f(z)\right| \\
&= \delta_i(f),
\end{align*}
where we have used the fact that, with $\rho_i(x_i,z_i) = \alpha_i \1\{x_i \neq z_i\}$, $\|\rho_i\|=\alpha_i$ for every $i \in T$. Therefore, for each $i \in T$, we can always choose a Wasserstein matrix $V^{(i+1)}$ for $K^{(i+1)}$ in such a way that its $i$th row has zeroes everywhere except for the $i$th column, where it has a $1$. Now, for any function $f : \sX \to \R$ which is $1$-Lipschitz with respect to $\rho_{\boldsymbol{\alpha}}$, we can take $\bdelta(f) = \1$. Therefore, for any such $f$ Theorem~\ref{thm:mart_1} gives 
\begin{align*}
	\PP_\mu\Bigg\{ |f(X)-\E_\mu[f(X)]| \ge t \Bigg\} \le 2\exp\left(-\frac{2t^2}{\sum^n_{i=1}\alpha^2_i}\right), \qquad \forall t \ge 0
\end{align*}
which is precisely McDiarmid's inequality. Since the constant $2$ in McDiarmid's inequality is known to be sharp, this shows that the coefficient $2$ in the exponent in \eqref{eq:martingale_bound} is likewise optimal. Moreover, with our choice of $\rho_i$, condition \eqref{eq:TC_i} of Theorem~\ref{thm:mart_2} holds with $c_i = \alpha^2_i/4$, and, in light of the discussion above, we can arrange $\sum_j V^{(i+1)}_{ij} = 1$. Therefore, by Theorem~\ref{thm:mart_2}, any function $f : \sX \to \R$ which is $1$-Lipschitz with respect to $\rho_{\boldsymbol{\alpha}}$ is $c$-subgaussian with constant
$$
c = \sum_{i \in T} c_i \Bigg(\sum_{j \in T}V^{(i+1)}_{ij}\Bigg)^2 = \frac{1}{4}\sum_{i \in T} \alpha^2_i,
$$
which is just another equivalent statement of McDiarmid's inequality.

It is also possible to consider alternative choices of the filtration $\{\cF_j\}^m_{j=0}$. For example, if we partition the index set $T$ into $m$ disjoint subsets (blocks) $T_1,\ldots,T_m$, we can take
$$
\cF_j \deq \sigma\left(X_i : i \in \Lambda_j\right), \qquad \forall i \in T
$$
where $\Lambda_j \deq T_1 \cup \ldots \cup T_j$. Defining for each $j \in [m]$ the Markov kernel
$$
\tilde{K}^{(j)}(x, \d y) \deq \delta_{x^{\Lambda_{i-1}}}(\d y^{\Lambda_{i-1}}) \otimes \mu^{T\backslash \Lambda_{i-1}}(\d y^{T \backslash \Lambda_{i-1}}|x^{\Lambda_{i-1}}),
$$
we can write
$$
M^{(j)} = \E_\mu[f(X)|\cF_j] - \E_\mu[f(X)|\cF_{j-1}] = K^{(j+1)} f - K^{(j)} f
$$
for every $j \in [m]$. As before, we take $K^{(1)}f = \E_\mu[f]$ and $K^{(m+1)}f = f$. Given a measurable function $f : \sX \to \Reals$, we can define the oscillation of $f$ in the $j$th block $T_j$, $j \in [m]$, by
\begin{align*}
	\tilde{\delta}_j(f) \deq \sup_{x,z \in \sX \atop x^{T\backslash T_j} = z^{T \backslash T_j}} \frac{|f(x)-f(z)|}{\rho^{T_j}(x^{T_j},z^{T_j})}.
\end{align*}
The definition of a Wasserstein matrix is modified accordingly: we say that a nonnegative matrix $\tilde{V}=(\tilde{V}_{jk})_{j,k \in [m]}$ is a Wasserstein matrix for a Markov kernel $K$ on $\sX$ with respect to the partition $\{T_j\}^m_{j=1}$ if, for any Lipschitz function $f : \sX \to \Reals$,
\begin{align*}
	\tilde{\delta}_j(Kf) \le \sum^m_{k=1} \tilde{V}_{jk}\tilde{\delta}_k(f)
\end{align*}
for all $j \in [m]$. With these definitions at hand, the following theorem, which generalizes a result of Paulin \cite[Thm.~2.1]{Paulin15}, can be proved in the same way as Theorem~\ref{thm:mart_1}:
\begin{theorem} For each $j \in [m+1]$, let $\tilde{V}^{(j)} = (\tilde{V}^{(j)}_{k\ell})_{k,\ell \in [m]}$ be a Wasserstein matrix for $\tilde{K}^{(j)}$ with respect to the partition $\{T_j\}$. Define the matrix $\tilde{\Gamma} = (\tilde{\Gamma}_{k\ell})_{k,\ell \in [m]}$ with entries
	\begin{align*}
		\tilde{\Gamma}_{k\ell} \deq \| \rho^{T_k} \| \tilde{V}^{(k+1)}_{k\ell},
	\end{align*}
	where
	\begin{align*}
		\| \rho^{T_k} \| \deq \sup_{x^{T_k},z^{T_k}} \rho^{T_k}(x^{T_k},z^{T_k})
	\end{align*}
	is the diameter of the metric space $(\sX^{T_k},\rho^{T_k})$. Then, for any $f \in {\rm Lip}(\sX,\rho)$ and for any $t \ge 0$,
	\begin{align*}
		\PP_\mu\Big\{ |f(X) - \E_\mu[f(X)]| \ge t \Big\} \le 2 \exp\left(-\frac{2t^2}{\big\| \tilde{\Gamma} \tilde{\bdelta}(f) \big\|^2_{\ell^2(m)}}\right).
	\end{align*}
\end{theorem}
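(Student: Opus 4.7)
The plan is to mirror the argument of Theorem~\ref{thm:mart_1}, with individual coordinates replaced by blocks $T_j$ and local oscillations $\delta_i$ replaced by block oscillations $\tilde\delta_j$. Decompose
\[
f(X) - \E_\mu[f(X)] = \sum_{j=1}^{m} M^{(j)}, \qquad M^{(j)} = \tilde K^{(j+1)} f - \tilde K^{(j)} f,
\]
which is a martingale difference sequence with respect to $\{\cF_j\}_{j=0}^m$ by construction (using $\tilde K^{(1)} f = \E_\mu f$ and $\tilde K^{(m+1)} f = f$). The goal is to produce $\cF_{j-1}$-measurable envelopes $A^{(j)} \le M^{(j)} \le B^{(j)}$ with $\|B^{(j)}-A^{(j)}\|_\infty$ controlled in terms of $\tilde\delta_j(\tilde K^{(j+1)} f)$, and then invoke Azuma--Hoeffding.

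First I would rewrite $M^{(j)}$ by conditioning on $\cF_{j-1}$ and inserting an extra integration over $X^{T_j}$ via the tower property, exactly as in the proof of Theorem~\ref{thm:mart_1}. This yields, for a fixed realization $x^{\Lambda_{j-1}}$,
\[
M^{(j)} = \int \mu^{T\setminus \Lambda_{j-1}}(\d y^{T\setminus \Lambda_{j-1}}\mid x^{\Lambda_{j-1}})\,\bigl(\tilde K^{(j+1)} f(x^{\Lambda_{j-1}} x^{T_j} y^{T\setminus \Lambda_j}) - \tilde K^{(j+1)} f(x^{\Lambda_{j-1}} y^{T_j} y^{T\setminus \Lambda_j})\bigr),
\]
where I am using that $\tilde K^{(j+1)} f$ depends on $y^{T\setminus\Lambda_j}$ only through the conditional law given $x^{\Lambda_{j-1}} x^{T_j}$. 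Taking the infimum and supremum over $x^{T_j}\in \sX^{T_j}$ inside the integral gives $\cF_{j-1}$-measurable bounds $A^{(j)}$ and $B^{(j)}$ with
\[
\|B^{(j)} - A^{(j)}\|_\infty \le \sup_{x^{T_j},z^{T_j}} \bigl|\tilde K^{(j+1)} f(\cdots x^{T_j} \cdots) - \tilde K^{(j+1)} f(\cdots z^{T_j} \cdots)\bigr| \le \|\rho^{T_k}\|\,\tilde\delta_j\bigl(\tilde K^{(j+1)} f\bigr),
\]
where the last step uses $|g(x)-g(z)| \le \tilde\delta_j(g)\,\rho^{T_j}(x^{T_j},z^{T_j}) \le \tilde\delta_j(g)\,\|\rho^{T_j}\|$ for any $g$ depending on coordinates in $T_j$ only through those coordinates.

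Next, the Wasserstein matrix hypothesis for $\tilde K^{(j+1)}$ gives
\[
\tilde\delta_j\bigl(\tilde K^{(j+1)} f\bigr) \le \sum_{\ell=1}^{m} \tilde V^{(j+1)}_{j\ell}\,\tilde\delta_\ell(f),
\]
so that $\|B^{(j)}-A^{(j)}\|_\infty \le (\tilde\Gamma\,\tilde\bdelta(f))_j$. Squaring and summing in $j$ yields
\[
\sum_{j=1}^{m} \|B^{(j)}-A^{(j)}\|_\infty^2 \le \|\tilde\Gamma\,\tilde\bdelta(f)\|^2_{\ell^2(m)}.
\]

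Finally, applying the Azuma--Hoeffding inequality \eqref{eq:AH} to the martingale difference sequence $\{M^{(j)}\}_{j=1}^{m}$ with these envelopes yields the claimed tail bound. The only non-routine step is the careful integral representation of $M^{(j)}$ leading to \eqref{eq:cond_range}'s block analogue; once that is in place, the Wasserstein matrix property and Azuma--Hoeffding plug in mechanically. \qed
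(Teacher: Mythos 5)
Your proof is correct and follows exactly the route the paper intends: the paper gives no separate argument for this theorem, stating only that it ``can be proved in the same way as Theorem~\ref{thm:mart_1},'' and your block-by-block adaptation (martingale decomposition over the filtration $\{\cF_j\}$, the integral representation of $M^{(j)}$ yielding $\cF_{j-1}$-measurable envelopes with $\|B^{(j)}-A^{(j)}\|_\infty \le \|\rho^{T_j}\|\,\tilde\delta_j(\tilde K^{(j+1)}f)$, the block Wasserstein-matrix bound, and Azuma--Hoeffding) is precisely that adaptation. The only blemish is a stray index in one display, where $\|\rho^{T_k}\|$ should read $\|\rho^{T_j}\|$.
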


\section{The martingale method in action}
\label{sec:using_martingales}

We now show that several previously published concentration inequalities for functions of dependent random variables arise as special cases of Theorem~\ref{thm:mart_1} by exploiting the freedom to choose the Wasserstein matrices $V^{(i)}$. In fact, careful examination of the statement of Theorem~\ref{thm:mart_1} shows that, for each $i \in T$, we only need to extract the $i$th row of $V^{(i+1)}$.

\subsection{Concentration inequalities under the Dobrushin uniqueness condition}

One particularly clean way of constructing the desired Wasserstein matrices is via the classical comparison theorem of Dobrushin for Gibbs measures \cite{Dobrushin_comparison}. For our purposes, we give its formulation due to F\"ollmer \cite{follmer_comparison}:

\begin{lemma}\label{lm:Dobrushin} Let $\nu$ and $\tilde{\nu}$ be two Borel probability measures on $\sX$. Define the matrix $C^\nu = (C^\nu_{ij})_{i,j \in T}$ and the vector $b^{\nu,\tilde{\nu}} = (b^{\nu,\tilde{\nu}}_i)_{i \in T}$ by
  \begin{align}
    \label{dobrushin-C-def}
    C^\nu_{ij} = \sup_{
      x,z
      \in \sX \atop x^{T \bs \{j\}} = z^{T \bs \{j\}}} \frac{W_{i}\big( \nu_i(\cdot|x^{T \bs \{i\}}), \nu_i(\cdot|z^{T \bs \{i\}}) \big)}{\rho_j(x_j,z_j)}
	\end{align}
	and
	\begin{align}
		b^{\nu,\tilde{\nu}}_i = \int_{\sX^{T \bs \{i\}}}\tilde{\nu}^{T \bs \{i\}}(\d x^{T \bs \{i\}}) W_{i}\big( \nu_i(\cdot|x^{T \bs \{i\}}),\tilde{\nu}_i(\cdot|x^{T \bs \{i\}})\big).
	\end{align}
	Suppose that the spectral radius of $C^\nu$ is strictly smaller than unity.
	Then, for any $f \in L^1(\mu)$,
	\begin{align}
		\left|\E_\nu f - \E_{\tilde{\nu}} f\right| \le \sum_{j,k \in T} \delta_j(f) D^\nu_{jk} b^{\nu,\tilde{\nu}}_k,
	\end{align}
	where $D^\nu \deq \sum^\infty_{m=0} (C^\nu)^m$.
\end{lemma}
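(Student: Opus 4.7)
My plan is to deploy a single-site Gibbs-sampler argument that exploits the invariance of $\nu$ under resampling any one coordinate from its $\nu$-conditional. For each $i \in T$, I introduce the Markov kernel
\begin{align*}
\Pi_i(x,\d y) \deq \delta_{x^{T \bs \{i\}}}(\d y^{T \bs \{i\}}) \otimes \nu_i(\d y_i \mid x^{T \bs \{i\}}),
\end{align*}
so that $\nu \Pi_i = \nu$ for each $i$. I then iterate these kernels on $f$: fix a cyclic schedule $i_1, i_2, \ldots \in T$ in which every coordinate appears infinitely often, set $g_0 = f$, and $g_k = \Pi_{i_k} g_{k-1}$. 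Since $\E_\nu g_k = \E_\nu f$ for every $k$ by invariance, telescoping gives
\begin{align*}
\E_\nu f - \E_{\tilde\nu} f = \bigl(\E_\nu g_N - \E_{\tilde\nu} g_N\bigr) + \sum_{k=1}^N \bigl(\E_{\tilde\nu} g_{k-1} - \E_{\tilde\nu} g_k\bigr),
\end{align*}
and the whole argument is about controlling the summands and showing the boundary term vanishes as $N \to \infty$.

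Two one-step estimates drive this. First, for a Lipschitz $g$, any $j \ne i$, and $x, z \in \sX$ differing only in coordinate $j$, I bound $|\Pi_i g(x) - \Pi_i g(z)|$ via an arbitrary coupling ${\bf P}_{x,z}$ of $\nu_i(\cdot \mid x^{T \bs \{i\}})$ and $\nu_i(\cdot \mid z^{T \bs \{i\}})$, controlling the integrand $g(x^{T\bs\{i\}} u_i) - g(z^{T\bs\{i\}} v_i)$ by $\delta_j(g)\rho_j(x_j,z_j) + \delta_i(g)\rho_i(u_i, v_i)$; taking the infimum over couplings and using the definition of $C^\nu_{ij}$ yields the Wasserstein-type action
\begin{align*}
\delta_j(\Pi_i g) \le \delta_j(g) + C^\nu_{ij}\, \delta_i(g) \quad (j \ne i), \qquad \delta_i(\Pi_i g) = 0.
\end{align*}
Second, disintegrating $\tilde\nu$ as $\tilde\nu^{T \bs \{i\}} \otimes \tilde\nu_i(\cdot \mid x^{T \bs \{i\}})$ and coupling its $i$-th conditional with $\nu_i(\cdot \mid x^{T \bs \{i\}})$ writes $\E_{\tilde\nu}[\Pi_i g] - \E_{\tilde\nu} g$ as an average over $\tilde\nu^{T \bs \{i\}}$ of $\int [g(x^{T \bs \{i\}} u_i) - g(x^{T \bs \{i\}} v_i)]\,\d{\bf Q}_x$, producing the one-step error bound
\begin{align*}
\bigl|\E_{\tilde\nu}[\Pi_i g] - \E_{\tilde\nu} g\bigr| \le \delta_i(g)\, b^{\nu,\tilde\nu}_i,
\end{align*}
in which $b^{\nu,\tilde\nu}_i$ enters exactly through its definition.

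Combining these, the $k$-th telescope term is at most $\delta_{i_k}(g_{k-1})\, b^{\nu,\tilde\nu}_{i_k}$, and the spectral radius hypothesis on $C^\nu$, applied to the iterated Wasserstein bound, forces $\bdelta(g_N) \to 0$ as $N \to \infty$, so $g_N$ becomes essentially constant and the boundary term $\E_\nu g_N - \E_{\tilde\nu} g_N$ vanishes. The hard part is then the combinatorial bookkeeping identifying the accumulated error with the Neumann series, i.e.\ establishing
\begin{align*}
\sum_{k \ge 1} \delta_{i_k}(g_{k-1})\, b^{\nu,\tilde\nu}_{i_k} \;\le\; \sum_{j,k \in T} \delta_j(f)\, D^\nu_{jk}\, b^{\nu,\tilde\nu}_k.
\end{align*}
The intuition is that each unit of initial oscillation $\delta_j(f)$ is ``consumed'' the next time coordinate $j$ is updated, contributing $\delta_j(f)\, b^{\nu,\tilde\nu}_j$ to the error, but, before being consumed, it has been redistributed to the remaining coordinates by $C^\nu$; summing over all redistribution paths of length $m$ produces $(C^\nu)^m$, and summing over $m \ge 0$ yields $D^\nu$. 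Executing this cleanly — either by a direct induction producing a tractable majorant $\bdelta(g_k) \preceq M_k \bdelta(f)$ that collapses the sum into $\bdelta(f)^\trn D^\nu b^{\nu,\tilde\nu}$, or by replacing the cyclic schedule with the randomized one-step kernel $\Pi = \tfrac{1}{n}\sum_i \Pi_i$ so that matrix identities become cleaner — is the most delicate step of the argument.
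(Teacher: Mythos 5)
The paper does not actually prove this lemma---it imports it from Dobrushin \cite{Dobrushin_comparison} in F\"ollmer's formulation \cite{follmer_comparison}---so there is no internal proof to compare against. Your proposal is essentially a reconstruction of the classical successive-substitution (single-site Gibbs sampler) argument behind the Dobrushin comparison theorem, and its skeleton is sound: the invariance $\nu\Pi_i=\nu$, the oscillation transfer bound $\delta_j(\Pi_i g)\le\delta_j(g)+C^\nu_{ij}\delta_i(g)$ with $\delta_i(\Pi_i g)=0$, and the one-step error bound $|\E_{\tilde\nu}[\Pi_i g]-\E_{\tilde\nu}g|\le\delta_i(g)\,b^{\nu,\tilde\nu}_i$ are all correct and are exactly the right ingredients (note the indices do line up: updating site $i$ after perturbing site $j$ produces the factor $C^\nu_{ij}$, and $C^\nu_{ii}=0$ since the conditional at $i$ does not depend on $x_i$, so no spurious self-loops arise). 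There is a harmless sign slip in your telescoping identity (the sum should carry $\E_{\tilde\nu}g_k-\E_{\tilde\nu}g_{k-1}$), and a standard measurable-selection point is suppressed when you integrate $x^{T\bs\{i\}}$-dependent near-optimal couplings.

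Two steps remain genuinely unexecuted. First, the bookkeeping you flag as delicate: your packet/path heuristic is the right picture (each packet is consumed exactly once when its site is next updated, spawning children weighted by the corresponding entries of $C^\nu$; since $C^\nu_{ii}=0$ the generated tree is in bijection with the terms of the Neumann series, and the spectral-radius hypothesis makes the total path sum finite), but to make it a proof you should carry out the induction over full sweeps, e.g.\ showing that the error accumulated through sweep $m$ is majorized by $\sum_{l=0}^{m}\bdelta(f)^\trn(C^\nu)^l\,b^{\nu,\tilde\nu}$ and that the residual oscillation vector after $m$ sweeps is majorized by $\sum_{l\ge m}(C^\nu)^l\bdelta(f)$. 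Second, and this is the one substantive gap: $\bdelta(g_N)\to 0$ does \emph{not} by itself imply $\E_\nu g_N-\E_{\tilde\nu}g_N\to 0$ when the metrics $\rho_i$ are unbounded, since a function with small coordinatewise oscillation can still have large range. You need to pair the vanishing oscillation vector with a fixed coupling ${\bf P}\in\cC(\nu,\tilde\nu)$ having finite costs $\E_{\bf P}[\rho_i(X_i,Y_i)]$, writing $|\E_\nu g_N-\E_{\tilde\nu}g_N|\le\sum_i\delta_i(g_N)\,\E_{\bf P}[\rho_i(X_i,Y_i)]\to 0$ (or impose the boundedness/integrability hypotheses under which the classical theorem is stated). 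With those two repairs the argument is complete and matches the standard proof in the literature.
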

\begin{remark} {The matrix $C^\nu$ is called the {\em Dobrushin interdependence matrix} of $\nu$. When the spectral radius of $C^\nu$ is strictly smaller than unity, we say that $\nu$ satisfies the {\em Dobrushin uniqueness condition}. This condition is used in statistical physics to establish the absence of phase transitions, which is equivalent to uniqueness of a global Gibbs measure consistent with a given local specification (see the book of Georgii \cite{Georgii} for details).}
\end{remark}

Given an index $i \in T$, we will extract the $i$th row of a Wasserstein matrix for $K^{(i+1)}$ by applying the Dobrushin comparison theorem to a particular pair of probability measures on $\sX$. Let $x,z \in \sX$ be two configurations that differ only in the $i$th coordinate: $x^{T \bs \{i\}} = z^{T \bs \{i\}}$ and $x_i \neq z_i$. Thus, we can write $z = x^{[i-1]}z_ix^{(i,n]}$, and
\begin{align}
	&K^{(i+1)} f(x) - K^{(i+1)} f(z) \nonumber\\
	&= \E_\mu[f(X)|X^{[i]} = x^{[i-1]}x_i] - \E_\mu[f(X)|X^{[i]}=x^{[i-1]}z_i] \nonumber\\
	&= \int_{\sX^{(i,n]}} f(x^{[i-1]}x_iy^{(i,n]})\mu^{(i,n]}(\d y^{(i,n]}|x^{[i-1]}x_i) - \int_{\sX^{(i,n]}} f(x^{[i-1]}z_iy^{(i,n]})\mu^{(i,n]}(\d y^{(i,n]}|x^{[i-1]}z_i) \nonumber\\
	&= \int_{\sX^{(i,n]}} \left(f(x^{[i-1]}x_iy^{(i,n]}) -  f(x^{[i-1]}z_iy^{(i,n]})\right)\mu^{(i,n]}(\d y^{(i,n]}|x^{[i-1]}x_i) \nonumber\\
	& \qquad \qquad + \int_{\sX^{(i,n]}} f(x^{[i-1]}z_iy^{(i,n]})\mu^{(i,n]}(\d y^{(i,n]}|x^{[i-1]}x_i) \nonumber\\
	& \qquad \qquad - \int_{\sX^{(i,n]}} f(x^{[i-1]}z_iy^{(i,n]})\mu^{(i,n]}(\d y^{(i,n]}|x^{[i-1]}z_i).\label{eq:deltaK_1}
\end{align}
By definition of the local oscillation, the first integral in \eqref{eq:deltaK_1} is bounded by $\delta_i(f) \rho_i(x_i,z_i)$. To handle the remaining terms, define two probability measures $\nu,\tilde{\nu}$ on $\sX$ by
\begin{align*}
	\nu(\d y) &\deq \delta_{x^{[i-1]}z_i}(\d y^{[i]}) \otimes \mu^{(i,n]}(\d y^{(i,n]}|x^{[i-1]}x_i) \\
	\tilde{\nu}(\d y) & \deq \delta_{x^{[i-1]}z_i}(\d y^{[i]}) \otimes \mu^{(i,n]}(\d y^{(i,n]}|x^{[i-1]}z_i).
\end{align*}
Using this definition and Lemma~\ref{lm:Dobrushin}, we can write
\begin{align}
	&\int_{\sX^{(i,n]}} f(x^{[i-1]}z_iy^{(i,n]})\mu^{(i,n]}(\d y^{(i,n]}|x^{[i-1]}x_i) - \int_{\sX^{(i,n]}} f(x^{[i-1]}z_iy^{(i,n]})\mu^{(i,n]}(\d y^{(i,n]}|x^{[i-1]}z_i) \nonumber\\
	& \qquad = \int f \d\nu - \int f\d\tilde{\nu} \nonumber\\
	& \qquad \le \sum_{j,k \in T} \delta_j(f) D^\nu_{jk} b^{\nu,\tilde{\nu}}_k.\label{eq:deltaK_2}
\end{align}
It remains to obtain explicit upper bounds on the entries of $D^\nu$ and $b^{\nu,\tilde{\nu}}$. To that end, we first note that, for a given $j \in T$ and for any $u,y \in \sX$,
\begin{align*}
	W_{j}\big( \nu_j(\cdot|u^{T \bs \{j\}}), \nu_j(\cdot|y^{T \bs \{j\}}) \big) = \begin{cases}
	0, & j \le i \\
	W_{j}\big( \mu_j(\cdot|x^{[i-1]}x_iu^{(i,n] \bs \{j\}}), \mu_j(\cdot|x^{[i-1]}z_iu^{(i,n] \bs \{j\}})\big), & j > i
	\end{cases}.
\end{align*}
Therefore, $C^\nu_{jk} \le C^\mu_{jk}$. Likewise, for a given $k \in T$ and for any $y \in \sX$,
\begin{align*}
	W_{k}\big( \nu_k(\cdot|y^{T\bs\{k\}}), \tilde{\nu}_k(\cdot | y^{T\bs\{k\}}) \big) = \begin{cases}
	0, & k \le i \\
	W_{k}\big( \mu_k(\cdot|x^{[i-1]}x_iy^{(i,n] \bs \{k\}}), \mu_k(\cdot|x^{[i-1]}z_i y^{(i,n] \bs \{k\}})\big), & k > i
	\end{cases}
\end{align*}
Therefore, $b^{\nu,\tilde{\nu}}_k  \le C^\mu_{ki}\rho_i(x_i,z_i)$. Since the matrices $C^\nu$ and $C^\mu$ are nonnegative, $D^\nu_{jk} \le D^\mu_{jk}$. Consequently, we can write
\begin{align}
	\int f \d\nu - \int f\d\tilde{\nu} &\le \sum_{j,k \in T} \delta_j(f) D^\mu_{jk} C^\mu_{ki} \rho_i(x_i,z_i) \nonumber\\
	&= \sum_{j \in T} \delta_j(f) (D^\mu C^\mu)_{ji} \rho_i(x_i,z_i) \nonumber\\
	&= \sum_{j \in T} \delta_j(f) (D^\mu - {\rm id})_{ji} \rho_i(x_i,z_i). \label{eq:deltaK_3}
\end{align}
Therefore, from \eqref{eq:deltaK_2} and \eqref{eq:deltaK_3}, we have
\begin{align}
	\frac{K^{(i+1)} f(x) - K^{(i+1)} f(z)}{\rho_i(x_i,z_i)} &\le \delta_i(f) + \sum_{j \in T} (D^\mu - \1)^\trn_{ij} \delta_j(f) \nonumber\\
	&= \sum_{j \in T} (D^\mu)^\trn_{ij} \delta_j(f).
\end{align}
We have thus proved the following:
\begin{corollary} Suppose that the probability measure $\mu$ satisfies the Dobrushin uniqueness condition, i.e.,  the spectral radius of its Dobrushin interdependence matrix $C^\mu$ is strictly smaller than unity. Then, for any $t \ge 0$, the concentration inequality \eqref{eq:martingale_bound} holds with
	\begin{align}
		\Gamma_{ij} = \| \rho_i \| (D^\mu)^\trn_{ij}, \qquad i,j \in T.
	\end{align}
\end{corollary}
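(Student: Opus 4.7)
The plan is to exhibit, for each $i \in T$, a Wasserstein matrix $V^{(i+1)}$ for $K^{(i+1)}$ whose $i$th row equals $((D^\mu)^\trn_{ij})_{j \in T}$; then Theorem~\ref{thm:mart_1} immediately produces the claimed inequality with $\Gamma_{ij} = \|\rho_i\|(D^\mu)^\trn_{ij}$, since only the $i$th row of $V^{(i+1)}$ enters the definition of $\Gamma_{i\cdot}$. Equivalently, it suffices to show that for every $f \in {\rm Lip}(\sX,\rho)$,
\begin{align*}
\delta_i\big(K^{(i+1)} f\big) \le \sum_{j \in T} (D^\mu)^\trn_{ij}\, \delta_j(f).
\end{align*}

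To prove this bound I would fix $x,z \in \sX$ that differ only in coordinate $i$ and decompose $K^{(i+1)} f(x) - K^{(i+1)} f(z)$ by adding and subtracting the integral of $f(x^{[i-1]} z_i y^{(i,n]})$ against $\mu^{(i,n]}(\d y^{(i,n]}|x^{[i-1]}x_i)$. This produces three pieces: the first captures the swap $x_i \leftrightarrow z_i$ at fixed tail and is bounded by $\delta_i(f)\rho_i(x_i,z_i)$ directly; the remaining two pieces combine into $\int f\,\d\nu - \int f\,\d\tilde{\nu}$ for the two measures $\nu$ and $\tilde{\nu}$ on $\sX$ obtained by fixing the first $i$ coordinates to $x^{[i-1]}z_i$ and attaching the conditional tail laws $\mu^{(i,n]}(\cdot|x^{[i-1]}x_i)$ and $\mu^{(i,n]}(\cdot|x^{[i-1]}z_i)$, respectively.

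The next step is to apply Dobrushin's comparison lemma (Lemma~\ref{lm:Dobrushin}) to $(\nu,\tilde{\nu})$. By inspecting the single-coordinate conditionals, $\nu_j(\cdot|y^{T\bs\{j\}})$ is a point mass for $j \le i$ (so the corresponding row of $C^\nu$ vanishes), while for $j > i$ it is a conditional law of $\mu$, whence $C^\nu_{jk} \le C^\mu_{jk}$ entrywise; consequently $D^\nu \le D^\mu$ entrywise. Similarly $b^{\nu,\tilde{\nu}}_k$ vanishes for $k \le i$ and, for $k > i$, is bounded by $C^\mu_{ki}\rho_i(x_i,z_i)$. Feeding these estimates into Lemma~\ref{lm:Dobrushin} collapses the double sum to $\sum_j \delta_j(f)(D^\mu C^\mu)_{ji}\rho_i(x_i,z_i)$, and the Neumann-series identity $D^\mu C^\mu = D^\mu - {\rm id}$ (valid because the spectral radius of $C^\mu$ is strictly less than one) rewrites this as $\sum_j \delta_j(f)(D^\mu - {\rm id})_{ji}\rho_i(x_i,z_i)$. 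Adding back the $\delta_i(f)\rho_i(x_i,z_i)$ from the first piece cancels the $-{\rm id}$, dividing by $\rho_i(x_i,z_i)$ and taking the supremum over admissible $x,z$ yields the required row bound, and Theorem~\ref{thm:mart_1} concludes.

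I expect the main obstacle to be the careful bookkeeping of the conditional laws of $\nu$ and $\tilde{\nu}$, specifically verifying that $b^{\nu,\tilde{\nu}}_k$ really reduces to the $(k,i)$-entry of $C^\mu$ scaled by $\rho_i(x_i,z_i)$ and vanishes for $k \le i$. This reduction to a single column of $C^\mu$ is precisely what makes the whole calculation collapse into the clean matrix identity above, so getting the indices right is the crucial step; everything else is mechanical assembly of the comparison lemma and the martingale theorem.
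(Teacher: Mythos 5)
Your proposal is correct and follows essentially the same route as the paper: the same three-term decomposition of $K^{(i+1)}f(x)-K^{(i+1)}f(z)$, the same pair of measures $\nu,\tilde{\nu}$ fed into the Dobrushin--F\"ollmer comparison lemma, the same entrywise bounds $C^\nu \le C^\mu$, $b^{\nu,\tilde{\nu}}_k \le C^\mu_{ki}\rho_i(x_i,z_i)$, and the Neumann-series identity $D^\mu C^\mu = D^\mu - {\rm id}$ to absorb the leading $\delta_i(f)$ term. The index bookkeeping you flag as the crucial step is handled exactly as you describe, so nothing further is needed.
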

For example, when each $\sX_i$ is equipped with the trivial metric $\rho_i(x_i,z_i) = \1\{x_i \neq z_i\}$, we have $\| \rho_i \| = 1$ for all $i$, and consequently obtain the concentration inequality
\begin{align}
  \label{dobrushin-kulske}
	\PP_\mu\Big\{ |f(X) - \E_\mu[f(X)]| \ge t \Big\} \le 2 \exp\left(-\frac{2t^2}{\| (D^\mu)^\trn \bdelta(f) \|^2_{\ell^2(T)}}\right).
\end{align} 
The same inequality, but with a worse constant in the exponent, was obtained by K\"ulske \cite[p.~45]{kulske03}.

\subsection{Concentration inequalities via couplings}

Another method for constructing Wasserstein matrices for the Markov kernels $K^{(i)}$ is via couplings. One notable advantage of this method is that it does not explicitly rely on the Dobrushin uniqueness condition; however, some such condition is typically necessary in order to obtain good bounds for the norm $\| \Gamma \bdelta(f) \|_{\ell^2(T)}$.

Fix an index $i \in T$ and any two $x,z \in \sX$ that differ only in the $i$th coordinate: $x^{T \bs \{i\}} = z^{T \bs \{i\}}$ and $x_i \neq z_i$. Let ${{\bf P}}^{[i]}_{x,z}$ be any coupling of the conditional laws $\mu^{(i,n]}(\cdot|x^{[i]})$ and $\mu^{(i,n]}(\cdot|z^{[i]})$. Then for any $f \in L^1(\mu)$ we can write
\begin{align*}
& K^{(i+1)} f(x) - K^{(i+1)} f(z) \\
&\qquad= \int_{\sX^{(i,n]} \times \sX^{(i,n]}} {{\bf P}}^{[i]}_{x,z}(\d u^{(i,n]}, \d y^{(i,n]}) \left(f(x^{[i]},u^{(i,n]})-f(z^{[i]},y^{(i,n]})\right) \\
&\qquad\le \delta_i(f) \rho_i(x_i,z_i) + \sum_{j \in T:\, j > i} \delta_j(f) \int_{\sX^{(i,n]} \times \sX^{(i,n]}} {{\bf P}}^{[i]}_{x,z}(\d u^{(i,n]},\d y^{(i,n]}) \rho_j(u_j,y_j).
\end{align*}
Therefore,
\begin{align*}
	\frac{|K^{(i+1)}f(x)-K^{(i+1)}f(z)|}{\rho_i(x_i,z_i)} &\le \delta_i(f) + \sum_{j \in T:\, j > i} \frac{\int \rho_j \d{{\bf P}}^{[i]}_{x,z}}{\rho_i(x_i,z_i)} \delta_j(f) \\
	&\le \delta_i(f) + \sum_{j \in T:\, j > i} \sup_{x,z \in \sX \atop x^{T \bs \{i\}} = z^{T \bs \{i\}}}\frac{\int \rho_j \d{{\bf P}}^{[i]}_{x,z}}{\rho_i(x_i,z_i)} \delta_j(f).
\end{align*}
Remembering that we only need the $i$th row of a Wasserstein matrix for $K^{(i+1)}$, we may take
\begin{align}\label{eq:coupling_V}
	V^{(i+1)}_{ij} = \begin{cases}
	0, & i > j \\
	1, & i = j \\
	 \displaystyle\sup_{x,z \in \sX \atop x^{T \bs \{i\}} = z^{T \bs \{i\}}}\frac{\displaystyle\int \rho_j \d{{\bf P}}^{[i]}_{x,z}}{\rho_i(x_i,z_i)}, & i < j
	 \end{cases}.
\end{align}
We have thus proved the following:
\begin{corollary} For each index $i \in T$ and for each pair $x,z \in \sX$ of configurations with $x^{T \bs \{i\}} = z^{T \bs \{i\}}$, pick an arbitrary coupling ${{\bf P}}^{[i]}_{x,z}$ of the conditional laws $\mu^{\{i\}}(\cdot|x^{[i]})$ and $\mu^{\{i\}}(\cdot|z^{[i]})$. Then, for any $t \ge 0$, the concentration inequality \eqref{eq:martingale_bound} holds with 
	\begin{align}\label{eq:Gamma}
		\Gamma_{ij} = \| \rho_i \| V^{(i+1)}_{ij}, \qquad i,j \in T
	\end{align}
	where the entries $V^{(i+1)}_{ij}$ are given by \eqref{eq:coupling_V}.
\end{corollary}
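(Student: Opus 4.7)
The plan is to read off the result directly from the coupling calculation carried out immediately before the statement. That calculation shows that for each index $i \in T$, for any $f \in L^1(\mu)$, and for any pair $x,z \in \sX$ with $x^{T \bs \{i\}} = z^{T \bs \{i\}}$ and $x_i \neq z_i$, choosing any coupling ${\bf P}^{[i]}_{x,z}$ of $\mu^{(i,n]}(\cdot|x^{[i]})$ and $\mu^{(i,n]}(\cdot|z^{[i]})$ yields the pointwise bound
$$
\frac{|K^{(i+1)}f(x) - K^{(i+1)}f(z)|}{\rho_i(x_i,z_i)} \le \delta_i(f) + \sum_{j > i}\frac{\int \rho_j(u_j,y_j) \,\d{\bf P}^{[i]}_{x,z}(u,y)}{\rho_i(x_i,z_i)}\,\delta_j(f).
$$
Taking the supremum over all such $(x,z)$ converts this into the Wasserstein matrix inequality $\delta_i(K^{(i+1)}f) \le \sum_{j \in T} V^{(i+1)}_{ij}\,\delta_j(f)$, where the row $(V^{(i+1)}_{ij})_{j \in T}$ is given precisely by \eqref{eq:coupling_V}. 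Although the coupling ${\bf P}^{[i]}_{x,z}$ is allowed to depend on the pair $(x,z)$, the outer supremum absorbs that dependence into a single deterministic constant, so the resulting $V^{(i+1)}_{ij}$ are legitimate entries of a (constant) Wasserstein matrix.

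It then remains only to invoke Theorem~\ref{thm:mart_1}. Inspection of that theorem, together with the definition of $\Gamma$, shows that only the $i$th row of $V^{(i+1)}$ enters the formula $\Gamma_{ij} = \|\rho_i\|\,V^{(i+1)}_{ij}$; the remaining rows may be completed to a full Wasserstein matrix by the same coupling construction applied at other indices, or simply left unspecified (for example, set to a trivially valid upper bound). Substituting the explicit row from \eqref{eq:coupling_V} into the bound \eqref{eq:martingale_bound} of Theorem~\ref{thm:mart_1} delivers the corollary. There is no real obstacle: the whole proof amounts to recognizing the coupling-based oscillation bound already derived as the $i$th row of a Wasserstein matrix and then quoting Theorem~\ref{thm:mart_1}.
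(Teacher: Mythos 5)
Your proposal is correct and follows essentially the same route as the paper: the corollary is proved exactly by the coupling computation preceding it, which bounds $|K^{(i+1)}f(x)-K^{(i+1)}f(z)|/\rho_i(x_i,z_i)$ by $\delta_i(f)$ plus the weighted sum over $j>i$, yields the $i$th row \eqref{eq:coupling_V} of a Wasserstein matrix for $K^{(i+1)}$ after taking the supremum, and then invokes Theorem~\ref{thm:mart_1}. Your remark that only the $i$th row of $V^{(i+1)}$ is needed matches the paper's own observation at the start of Section~\ref{sec:using_martingales}.
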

In the case when each $\sX_i$ is equipped with the trivial metric $\rho_i(x_i,z_i) = \1\{x_i \neq z_i\}$, the entries $\Gamma_{ij}$ for $j > i$ take the form
\begin{align}
	\Gamma_{ij} = \sup_{x,z \in \sX \atop x^{T \bs \{i\}} = z^{T \bs \{i\}}} {{\bf P}}^{[i]}_{x,z}\left\{Y^{(0)}_j \neq Y^{(1)}_j\right\},
\end{align}
where $(Y^{(0)},Y^{(1)}) = \big((Y^{(0)}_{i+1},\ldots,Y^{(0)}_n),(Y^{(1)}_{i+1},\ldots,Y^{(1)}_n)\big)$ is a random object taking values in $\sX^{(i,n]} \times \sX^{(i,n]}$. A special case of this construction, under the name of \textit{coupling matrix}, was used by
    Chazottes et al.~\cite{chazottes07}.
    In that work, each ${{\bf P}}^{[i]}_{x,z}$ was chosen to minimize
    \begin{align*}
{{\bf P}} \{Y^{(0)} \neq Y^{(1)} \},
\end{align*}
over all couplings ${{\bf P}}$ of $\mu^{(i,n]}(\cdot|x^{[i]})$ and $\mu^{(i,n]}(\cdot|z^{[i]})$, in which case we have
\begin{align*}
	{{\bf P}}^{[i]}_{x,z}\{Y^{(0)} \neq Y^{(1)}\} &= \inf_{{\bf P} \in \cC(\mu^{(i,n]}(\cdot|x^{[i]}),\mu^{(i,n]}(\cdot|z^{[i]}))} {{\bf P}}\{Y^{(0)} \neq Y^{(1)}\} \\
&= \tvnrm{\mu^{(i,n]}(\cdot|x^{[i]}) - \mu^{(i,n]}(\cdot|z^{[i]})}.
\end{align*}
\sloppypar\noindent However, it is not clear how to relate the quantities ${{\bf P}}^{[i]}_{x,z}\left\{Y^{(0)}_j \neq Y^{(1)}_j\right\}$ and ${{\bf P}}^{[i]}_{x,z}\left\{Y^{(0)} \neq Y^{(1)}\right\}$, apart from the obvious bound
\begin{align*}
	{{\bf P}}^{[i]}_{x,z} \{Y^{(0)}_j \neq Y^{(1)}_j \} &\le {{\bf P}}^{[i]}_{x,z} \{Y^{(0)} \neq Y^{(1)} \} = \tvnrm{\mu^{(i,n]}(\cdot|x^{[i]}) - \mu^{(i,n]}(\cdot|z^{[i]})},
\end{align*}
which gives
\begin{align*}
	\Gamma_{ij} \le \sup_{x,z \in \sX \atop x^{T \bs \{i\}} = z^{T \bs \{i\}}} \tvnrm{\mu^{(i,n]}(\cdot|x^{[i]}) - \mu^{(i,n]}(\cdot|z^{[i]})}.
\end{align*}
An alternative choice of coupling is the so-called \textit{maximal coupling} due to Goldstein \cite{Goldstein_maximal}, which for our purposes can be described as follows: let $U = (U_\ell)^m_{\ell=1}$ and $Y = (Y_\ell)^m_{\ell =1}$ be two random $m$-tuples taking values in a product space $\sE = \sE_1 \times \ldots \times \sE_m$, where each $\sE_\ell$ is Polish. Then there exists a coupling ${{\bf P}}$ of the probability laws  $\cL(U)$ and $\cL(Y)$, such that
\begin{align}\label{eq:Goldstein}
	{{\bf P}} \left\{ U^{[\ell,m]} \neq Y^{[\ell,m]} \right\} = \tvnrm{\cL(U^{[\ell,m]})-\cL(Y^{[\ell,m]})}, \qquad \ell \in \{1,\ldots,m\}.
\end{align}
Thus, for each $i \in T$ and for every pair $x,z \in \sX$ with $x^{T \bs \{i\}} = z^{T \bs \{i\}}$, let ${{\bf P}}^{[i]}_{x,z}$ be the Goldstein coupling of $\mu^{(i,n]}(\cdot|x^{[i]})$ and $\mu^{(i,n]}(\cdot|z^{[i]})$. Then for each $j \in \{i+1,\ldots,n\}$, using \eqref{eq:Goldstein} we have
\begin{align*}
	{{\bf P}}^{[i]}_{x,z}\{Y^{(0)}_j \neq Y^{(1)}_j \} &\le {{\bf P}}^{[i]}_{x,z}\{ (Y^{(0)}_j,\ldots,Y^{(0)}_n) \neq (Y^{(1)}_j,\ldots,Y^{(1)}_n) \} \\
	&= \tvnrm{\mu^{[j,n]}(\cdot|x^{[i]}) - \mu^{[j,n]}(\cdot|z^{[i]})}.
\end{align*}
This choice of coupling gives rise to the upper-triangular matrix $\Gamma = (\Gamma_{ij})_{i,j \in T}$ with
\begin{align}
  \label{GammaKontRam}
	\Gamma_{ij} = \begin{cases}
	0, & i > j \\
	1, & i = j \\
	\displaystyle\sup_{x,z \in \sX \atop x^{T \backslash \set{i}} = z^{T \backslash \set{i}}}
        \tvnrm{\mu^{[j,n]}(\cdot|x^{[i]})-\mu^{[j,n]}(\cdot|z^{[i]})}, & i < j
	\end{cases}.
\end{align}	
Substituting this matrix into \eqref{eq:martingale_bound}, we recover the concentration inequality of
Kontorovich and Ramanan \cite{kontram06},
but with an improved constant in the exponent.

\begin{remark}{
  It was erroneously claimed in
  \cite{kont07-thesis,kontorovich12,kont-weiss-14}
  that the basic concentration inequalities of Chazottes et al.~\cite{chazottes07}
  and Kontorovich and Ramanan \cite{kontram06} are essentially the same,
  only derived using different methods.
  As the discussion above elucidates, the two methods use different couplings (the
  former, explicitly, and the latter, implicitly) --- which yield quantitatively
  different and, in general, incomparable mixing coefficients.}
 \end{remark}
 
 \begin{remark}
   {Kontorovich and Ramanan
     obtained the matrix \eqref{GammaKontRam} using analytic methods without constructing an explicit coupling. In 2012, S.~Shlosman posed the following question: could this matrix have been derived using a suitable coupling? We can now answer his question in the affirmative: the coupling  is precisely Goldstein's maximal coupling.}
\end{remark}  

As an illustration, let us consider two specific types of the probability law $\mu$: a directed Markov model (i.e., a Markov chain) and an undirected Markov model (i.e., a Gibbsian Markov random field). In the directed case, suppose that the elements of $T$ are ordered in such a way that $\mu$ can be disintegrated in the form
\begin{align}
  \label{MarkovKK}
\mu(\d x) = \mu_1(\d x_1) \otimes K_1(x_1,\d x_2) \otimes K_2(x_2,\d x_3) \otimes \ldots \otimes K_{n-1}(x_{n-1}, \d x_n),
\end{align}
where $\mu_0$ is a Borel probability measure on $(\sX_1,\cB_1)$, and, for each $i \in [1,n-1]$, $K_i$ is a Markov kernel from $\sX_i$ to $\sX_{i+1}$. For each $i \in [1,n)$, let
$$
\theta_i \deq \sup_{x_i,z_i \in \sX_i} \tvnrm{K_i(x_i,\cdot) - K_i(z_i,\cdot)}
$$
be the Dobrushin contraction coefficient of $K_i$. Fix $1\le i<j\le n$ and $x^{[i-1]}\in\sX^{[i-1]},y_i,y_i'\in\sX_i$.
An easy calculation \cite{kontorovich12} shows that,
defining the signed measures $\eta_i$ on $\sX_{i+1}$ by $\eta(\d x_{i+1})=K_i(y_i,\d x_{i+1})-K_i(y_i',\d x_{i+1})$
and $\zeta_j$ on $\sX_j$ by
$$ \zeta_j = \eta_i K_i K_{i+1} K_{i+2}\ldots K_{j-1},$$
we have
\begin{align}
  \label{contr-decomp}
  \tvnrm{\mu^{[j,n]}(\cdot|x^{[i-1]}y_i)-\mu^{[j,n]}(\cdot|x^{[i-1]}y_i')}
  &=  \tvnrm{\zeta_j} \le  
  \theta_i\theta_{i+1}\ldots\theta_{j-1},
\end{align}
where (\ref{eq:doblin}) was repeatedly invoked to obtain the last inequality.
The above yields an upper bound on the $\Gamma_{ij}$ in (\ref{GammaKontRam})
and hence in the corresponding concentration inequality in
\eqref{eq:martingale_bound}.
When more delicate (e.g., spectral \cite{kont-weiss-14,Paulin15}) estimates on $\tvnrm{\zeta_j}$ are available,
these translate directly into tighter concentration bounds.

In the undirected case, $\mu$ is a Gibbsian Markov random field induced by pair potentials \cite{Georgii}. To keep things simple, we assume that the local spaces $\sX_i$ are all finite. Define an undirected graph with vertex set $T=[n]$ and edge set $E=\set{[i,i+1]:1\le i<n}$ (i.e., a chain graph with vertex set $T$). Associate with each edge $(i,j)\in E$ a potential function $\psi_{ij}:\sX_i\times \sX_j\to[0,\infty)$.
  Together, these define a probability measure $\mu$ on $\sX$  via
  $$ \mu(x) = \frac{
    \prod_{(i,j)\in E}\psi_{ij}(x_i,x_j)
  }{
    \sum_{y\in\sX}
    \prod_{(i,j)\in E}\psi_{ij}(y_i,y_j)
  }.
  $$
  Since $\mu$ is a Markov measure on $\sX$, there is a sequence of Markov kernels
  $K_1,\ldots, K_{n-1}$ generating $\mu$ in the sense of (\ref{MarkovKK}). It is shown in
  \cite{kontorovich12} that the contraction coefficient $\theta_i$ of the kernel $K_i$ is bounded by
  \begin{align*}
    \theta_i\le\frac{R_i-r_i}{R_i+r_i},
  \end{align*}
  where
  $$
  R_i=\sup_{(x_i,x_{i+1})\in\sX_i\times\sX_{i+1}} \psi_{i,i+1}(x_i,x_{i+1}),
  \qquad
  r_i=\inf_{(x_i,x_{i+1})\in\sX_i\times\sX_{i+1}} \psi_{i,i+1}(x_i,x_{i+1}).$$
  The estimate above implies a concentration result, either via (\ref{contr-decomp})
  or via (\ref{dobrushin-kulske}). To apply the latter, recall that
  $D^\mu =\sum_{k=1}^\infty (C^\mu)^k$, where $C^\mu$ is the Dobrushin interdependence matrix
  defined in (\ref{dobrushin-C-def}).
  Assuming that
  $\rho$ is the unweighted Hamming metric (i.e., $\rho_i(x_i,z_i) = \1\{x_i \neq z_i\}$ for all $i$) and that
  the $\theta_i$'s are all majorized by some $\theta<1$,
  it is easy to see that  $(C^\mu)_{ij}\le \theta^{|i-j|}$.

\section{Open questions}
\label{sec:open}

Our focus in this chapter has been on the martingale method for establishing concentration inequalities. In the case of product measures, other techniques, such as the entropy method or transportation-information inequalities, often lead to sharper bounds. However, these alternative techniques are less developed in the dependent setting, and there appears to be a gap between what is achievable using the martingale method and what is achievable using other means. We close the chapter by listing some open questions that are aimed at closing this gap:
\begin{itemize}
	\item \textbf{(Approximate) tensorization of entropy.} In the independent case, it is possible to derive the same concentration inequality (e.g., McDiarmid's inequality) using either the martingale method or the entropy method, often with the same sharp constants. However, once the independence assumption is dropped, the situation is no longer so simple. Consider, for example, tensorization of entropy. Several authors (see, e.g., \cite{Marton_tensorization_0,Caputo_etal_tensorization,Marton_tensorization}) have obtained so-called \textit{approximate} tensorization inequalities for the relative entropy in the case of weakly dependent random variables: under certain regularity conditions on $\mu$, there exists a constant $A_\mu \ge 1$, such that, for any other probability measure $\nu$,
\begin{align}\label{eq:approximate_tensorization}
	D(\nu \| \mu) \le A_\mu \cdot \sum_{i \in T} \E_\nu D\big(\nu_i(\cdot|X^{T\backslash\set{i}}) \big\| \mu_i(\cdot|X^{T\backslash\set{i}})\big).
\end{align}
Having such an inequality in hand, one can proceed to prove concentration for Lipschitz functions in exactly the same way as in the independent case. However, it seems that the constants $A_\mu$ in \eqref{eq:approximate_tensorization} are not sharp in the sense that the resulting concentration inequalities are typically worse than what one can obtain using Theorems~\ref{thm:mart_1} or \ref{thm:mart_2} under the same assumptions on $\mu$ and $f$. This motivates the following avenue for further investigation: Derive sharp inequalities of the form \eqref{eq:approximate_tensorization} by relating the constant $A_\mu$ to appropriately chosen Wasserstein matrices.
	\item \textbf{General Wasserstein-type matrices.} Using the techniques pioneered by Marton, Samson proved the following concentration of measure result: Consider a function $f : \sX \to \Reals$ satisfying an ``asymmetric'' Lipschitz condition of the form
	$$
	f(x) - f(y) \le \sum_{i \in T} \alpha_i(x) \1\{x_i \neq y_i\}, \qquad \forall x,y \in \sX
	$$
for some functions $\alpha_i : \sX \to \Reals$, such that $\sum_{i \in T}\alpha^2_i(x) \le 1$ for all $x \in \sX$. Then, for any Borel probability measure $\mu$ on $\sX$, we have
\begin{align}\label{eq:Samson}
	\PP_\mu\Big\{ f(X)-\E_\mu[f(X)] \ge t\Big\} \le \exp\left(-\frac{t^2}{2 \| \Delta \|^2_2}\right),
\end{align}
where the matrix $\Delta$ has entries $\Delta_{ij} = \sqrt{\Gamma_{ij}}$ with $\Gamma_{ij}$ given by \eqref{GammaKontRam}, and
$$
\| \Delta \|_2 \deq \sup_{v \in \Reals^T \bs \{0\}} \frac{\| \Delta v \|_{\ell^2(T)}}{\| v \|_{\ell^2(T)}}
$$
is the operator norm of $\Delta$. A more general result in this vein was derived by Marton \cite{marton03}, who showed that an inequality of the form \eqref{eq:Samson} holds with $\Delta$ computed in terms of any matrix $\Gamma$ of the form \eqref{eq:coupling_V}, where each $\rho_i$ is the trivial metric. Samson's proof relies on a fairly intricate recursive coupling argument. It would be interesting to develop analogs of \eqref{eq:Samson} for arbitrary choices of the metrics $\rho_i$ and with full freedom to choose the Wasserstein matrices $V^{(i)}$ for each $i \in T$. A recent paper by Wintenberger \cite{Wintenberger} pursues this line of work.
\item \textbf{The method of exchangeable pairs and Wasserstein matrices.}
  An alternative route towards concentration inequalities in the dependent setting is via Stein's method of exchangeable pairs \cite{chatterjee05,Chatterjee_PTRF}. Using this method, Chatterjee obtained the following result \cite[Chap.~4]{chatterjee05}: Let $f : \sX \to \Reals$ be a function which is $1$-Lipschitz with respect to the weighted Hamming metric $\rho_{\boldsymbol{\alpha}}$ defined in \eqref{weighted-ham-def}. Let $\mu$ be a Borel probability measure on $\sX$, whose Dobrushin interdependence matrix $C^\mu$ satisfies the condition $\| C^\mu \|_2 < 1$. Then, for any $t \ge 0$,
	\begin{align}\label{eq:chatterjee}
		\PP_\mu\Big\{ |f(X)-\E_\mu[f(X)] \ge t|\Big\} \le 2\exp\left(-\frac{(1-\| C^\mu \|_2)t^2}{\sum_{i \in T}\alpha^2_i}\right).
	\end{align}
The key ingredient in the proof of \eqref{eq:chatterjee} is the so-called Gibbs sampler, i.e., the Markov kernel $\bar{K}$ on $\sX$ given by
$$
\bar{K}(x, \d y) \deq \frac{1}{|T|}\sum_{i \in T} \delta_{x^{T\bs\set{i}}}(\d y^{T\bs\set{i}}) \otimes \mu_i(\d y_i|x^{T\bs\set{i}}).
$$
This kernel leaves $\mu$ invariant, i.e., $\mu = \mu \bar{K}$, and it is easy to show (see, e.g., \cite{Marton_tensorization}) that it contracts the $\bar{W}$ distance: for any other probability measure $\nu$ on $\sX$,
\begin{align*}
	\bar{W}(\mu \bar{K},\nu \bar{K}) \le \left(1-\frac{1-\| C^\mu \|_2}{|T|}\right) \bar{W}(\mu,\nu).
\end{align*}
Since one can obtain contraction estimates for Markov kernels using Wasserstein matrices, it is natural to ask whether Chatterjee's result can be derived as a special case of a more general method, which would let us freely choose an arbitrary Markov kernel $K$ that leaves $\mu$ invariant and control the constants in the resulting concentration inequality by means of a judicious choice of a Wasserstein matrix for $K$. Such a method would most likely rely on general comparison theorems for Gibbs measures \cite{rebeschini14}.

\end{itemize}

\section*{Acknowledgments}

The second author would like to thank IMA for an invitation to speak at the workshop on Information Theory and Concentration Phenomena in Spring 2015, which was part of the annual program ``Discrete Structures: Analysis and Applications.'' The authors are grateful to the anonymous referee for several constructive suggestions, and to Dr.~Naci Saldi for spotting an error in an earlier version of the manuscript. A.~Kontorovich was partially supported by
the Israel Science Foundation
(grant No.~1141/12)
and 
a Yahoo Faculty award. M.~Raginsky would like to acknowledge the support of the U.S.\ National Science Foundation via CAREER award CCF--1254041.

\end{document}